\newtheorem{thm}{Theorem}
\newtheorem{corollary}[thm]{Corollary}
\newtheorem{lemma}[thm]{Lemma}
\newtheorem{prop}[thm]{Proposition}
\theoremstyle{definition}
\theoremstyle{remark}
\numberwithin{equation}{section}
\def\J#1#2#3{ \left\{ #1,#2,#3 \right\} }
\begin{document}
\title[\v{C}eby\v{s}\"{e}v subtriples of JB$^{*}$-triples]{Inner ideals, compact tripotents and \v{C}eby\v{s}\"{e}v subtriples of JB$^{*}$-triples and C$^*$-algebras}
\author[F.B. Jamjoom]{Fatmah B. Jamjoom}
\address{Department of Mathematics, College of Science, King Saud
University, P.O.Box 2455-5, Riyadh-11451, Kingdom of Saudi Arabia.}
\email{fjamjoom@ksu.edu.sa}
\author[A.M. Peralta]{Antonio M. Peralta}
\address{Departamento de An{\'a}lisis Matem{\'a}tico, Universidad de Granada,%
\\
Facultad de Ciencias 18071, Granada, Spain}
\email{aperalta@ugr.es}
\author[A.A. Siddiqui]{Akhlaq A. Siddiqui}
\address{Department of Mathematics, College of Science, King Saud
University, P.O.Box 2455-5, Riyadh-11451, Kingdom of Saudi Arabia.}
\email{asiddiqui@ksu.edu.sa}
\author[H.M. Tahlawi]{Haifa M. Tahlawi}
\address{Department of Mathematics, College of Science, King Saud
University, P.O.Box 2455-5, Riyadh-11451, Kingdom of Saudi Arabia.}
\email{htahlawi@ksu.edu.sa}
\keywords{\v{C}eby\v{s}\"{e}v/Chebyshev subspace; JB$^*$-triples; \v{C}eby\v{s}\"{e}v/Chebyshev subtriple; von Neumann algebra; C$^*$-algebra; Brown-Pedersen quasi-invertibility; spin factor.}
\subjclass{Primary  41A50; 41A52; 41A65; 46L10; Secondary 17C65; 46L05.}

\begin{abstract} The aim of this note is to study \v{C}eby\v{s}\"{e}v JB$^*$-subtriples of general JB$^*$-triples. It is established that if $F$ is a non-zero  \v{C}eby\v{s}\"{e}v JB$^*$-subtriple of a JB$^*$-triple $E$, then exactly one of the following statements holds:\begin{enumerate}[$(a)$]\item $F$ is a rank one JBW$^*$-triple with dim$(F)\geq 2$ (i.e. a complex Hilbert space regarded as a type 1 Cartan factor). Moreover, $F$ may be a closed subspace of arbitrary dimension and $E$ may have arbitrary rank;
\item $F= \mathbb{C} e$, where $e$ is a complete tripotent in $E$;
\item $E$ and $F$ are rank two JBW$^*$-triples, but $F$ may have arbitrary dimension;
\item $F$ has rank greater or equal than three and $E=F$.
\end{enumerate}
\end{abstract}

\thanks{The authors extend their appreciation to the Deanship of Scientific Research at King Saud University for funding this work through research group no  RG-1435-020. The second author also is partially supported by the Spanish Ministry of Economy and Competitiveness project no. MTM2014-58984-P}

\maketitle
\section{Introduction}

It is known that certain problems on operator algebras are more feasible when the algebra under study is a von Neumann algebra (i.e. a C$^*$-algebra which is also a dual Banach space). For example, A.G. Robertson gave in \cite{Robert1977} a complete description of one-dimensional \v{C}eby\v{s}\"{e}v subspaces, and the finite dimensional \v{C}eby\v{s}\"{e}v hermitian subalgebras with dimension bigger than 1 of a general von Neumann algebra. Concretely, for a non-zero element $x$ in a von Neumann algebra $M$, subspace $\mathbb{C} x$ is a \v{C}eby\v{s}\"{e}v subspace of ${M}$ if and only if there is a projection $p$ in the
center of ${M}$ such that $p x$ is left invertible in $p{M}$ and $(1-p)x$ is right invertible in $(1-p){M}$ (cf. \cite[Theorem 1]{Robert1977}). A finite dimensional $^*$-subalgebra $N$ of an infinite dimensional von Neumann algebra ${M}$ with dim$(N)>1$ never is a \v{C}eby\v{s}\"{e}v subspace of ${M}$ (see \cite[Theorem 6]{Robert1977}).\smallskip

Two years were needed to relax the assumptions concerning duality, to finally obtain valid answers for \v{C}eby\v{s}\"{e}v subspaces and subalgebras of a general C$^*$-algebra. A.G. Robertson and D. Yost proved in \cite[Corollary 1.4]{RobertYost79} that an infinite dimensional C$^*$-algebra $A$ admits a finite dimensional $^*$-subalgebra $B$ which is also a \v{C}eby\v{s}\"{e}v in $A$ if and only if $A$ is unital and $B=\mathbb{C} 1$. The results proved by Robertson and Yost were complemented by G.K. Pedersen, who shows that if $A$ is a C$^*$-algebra without unit and $B$ is a \v{C}eby\v{s}\"{e}v C$^*$-subalgebra of $A$, then $A=B$ (compare \cite[Theorem 4] {Ped79}).\smallskip

We recall that a subspace $V$ of a Banach space $X$ is called a \emph{\v{C}eby\v{s}\"{e}v {\rm(}Chebyshev{\rm)} subspace} of $X$ if for each $x\in X$ there exists a unique point $\pi_{_V} (x)\in V$ such that $\hbox{dist}(x,V)=\left\Vert x-\pi_{_V} (x)\right\Vert $. Throughout this note the symbol $\pi_{_V} (x)$ will denote the best approximation of an element $x$ in $X$ in a \v{C}eby\v{s}\"{e}v subspace $V$ of $X$. For more information on \v{C}eby\v{s}\"{e}v and best approximation theory we refer to the monograph \cite{Singer}.\smallskip

Similar benefits to those obtained working with von Neumann algebras re-appear when studying \v{C}eby\v{s}\"{e}v subspaces which Ternary Rings of Operators (TRO's) of a given von Neumann algebra, or when exploring \v{C}eby\v{s}\"{e}v JBW$^*$-subtriples of a given JBW$^*$-triple (see section \ref{sec:preliminaries} for definitions). In a previous paper, we establish the following description of \v{C}eby\v{s}\"{e}v JBW$^*$-subtriples of a JBW$^*$-triple.

\begin{thm}\label{t Cebysev  JBW*-subtriples JPST}\cite[Theorem 13]{JamPeSiTah2014Ceby} Let $N$ be a non-zero \v{C}eby\v{s}\"{e}v JBW$^*$-subtriple of a JBW$^*$-triple $M$. Then exactly one of the following statements holds:\begin{enumerate}[$(a)$]\item $N$ is a rank one JBW$^*$-triple with dim$(N)\geq 2$ (i.e. a complex Hilbert space regarded as a type 1 Cartan factor). Moreover, $N$ may be a closed subspace of arbitrary dimension and $M$ may have arbitrary rank;
\item $N= \mathbb{C} e$, where $e$ is a complete tripotent in $M$;
\item $N$ and $M$ have rank two, but $N$ may have arbitrary dimension;
\item $N$ has rank greater or equal than three and $N=M$. $\hfill\Box$
\end{enumerate}
\end{thm}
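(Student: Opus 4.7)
The plan is to proceed by a careful case analysis on $\text{rank}(N)$, exploiting the structure theory of JBW$^*$-triples (Peirce decomposition with respect to tripotents, rank, the classification of Cartan factors, and the atomic decomposition of a JBW$^*$-triple) together with the defining property that every $x\in M$ has a unique nearest point $\pi_N(x)\in N$. The strategy is, in each non-target case, to construct an $x\in M$ whose distance to $N$ is attained at two distinct elements of $N$, thereby forcing $N\subseteq M$ to conform to one of the patterns (a)--(d).

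I would first handle $\text{rank}(N)=1$. A rank-one JBW$^*$-triple is a Hilbert space regarded as a type 1 Cartan factor, which already places us in case (a) as soon as $\dim N\geq 2$; here I would also verify that such $N$ can genuinely embed as a \v{C}eby\v{s}\"{e}v subtriple for $M$ of arbitrary rank (columns of a type 1 factor, natural Hilbert subspaces of spin factors, etc.). If $\dim N=1$, write $N=\mathbb{C}x$; a local Gelfand/spectral argument in the JB$^*$-subtriple generated by $x$ reduces the Chebyshev property to $x$ being a scalar multiple of a tripotent $e$. I would then show that $e$ must be complete in $M$: if the Peirce $0$-space $M_0(e)$ is non-zero, a suitably chosen $y\in M_0(e)$ makes $e+y$ equidistant from $\lambda_1 e$ and $\lambda_2 e$ for two distinct unimodular scalars, producing case (b).

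For $\text{rank}(N)\geq 2$, pick a frame $e_1,\dots,e_r$ of minimal mutually orthogonal tripotents of $N$ and form the joint Peirce decomposition of $M$. If $\text{rank}(N)=2$, the key point is $\text{rank}(M)=2$: otherwise there is a tripotent $u\in M$ orthogonal to $e_1+e_2$, and one can show that an element of the form $e_1+e_2+\alpha u$ admits a circle of best approximants in $N$ via the $U(1)$-action rotating the frame, contradicting uniqueness; this gives (c). If $\text{rank}(N)\geq 3$, to obtain (d) I would prove $N=M$ by contradiction: for $x\in M\setminus N$ the joint Peirce components $x_{ij}$ relative to an $N$-frame must contain a non-zero off-diagonal or Peirce-$0$ piece, and the abundance of orthogonal minimal tripotents in $N$ (rank $\geq 3$) supplies inner triple automorphisms of $N$ that permute best approximations, again producing two distinct minimizers.

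The main obstacle is this last rigidity step in rank $\geq 3$: one must exploit the full Peirce calculus of $N$ and its symmetry group to move $\pi_N(x)$ non-trivially, and the argument has to work uniformly across the Cartan factor types that can arise (type 1--4 factors and their $\ell^\infty$-sums). A secondary difficulty is the rank-two case, where spin factors behave geometrically very differently from rectangular factors and one has to treat the spin-type contribution separately, using the quadratic form that defines the triple product on a spin factor to compute distances explicitly. Throughout, the JBW$^*$ assumption is used crucially: weak$^*$-closedness of $N$, existence of sufficiently many tripotents, and the atomic decomposition all translate the analytic uniqueness of $\pi_N(x)$ into algebraic constraints on Peirce components.
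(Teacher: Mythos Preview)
This theorem is quoted in the paper from \cite{JamPeSiTah2014Ceby} and carries no proof here, but the paper's own generalization (Theorem~\ref{t Cebysev JB*-subtriples JPST}) shows how the authors actually argue, and your plan diverges from that in a way that creates a real gap in the rank-two step.

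Your rank-two argument rests on the dichotomy ``either $\mathrm{rank}(M)=2$ or there exists a tripotent $u\in M$ orthogonal to $e_1+e_2$.'' That dichotomy is false in this setting. Since $e_1+e_2$ is complete in $N$, the key structural lemma (Proposition~\ref{prop JamPeSiddTahCeby}(a), i.e.\ $E_0(e)=F_0(e)$ for any tripotent $e\in F$) already forces $M_0(e_1+e_2)=N_0(e_1+e_2)=\{0\}$, so no such $u$ can exist regardless of $\mathrm{rank}(M)$. What you are missing is that $e_1,e_2$, while minimal in $N$, are not yet known to be minimal in $M$; a priori $M_2(e_1)$ could be large and $M$ could have high rank even with $e_1+e_2$ complete. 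The paper closes this via Proposition~\ref{prop JamPeSiddTahCeby}(b): since $N_0(e_j)\neq\{0\}$ (it contains $e_{3-j}$), one gets $M_2(e_j)=N_2(e_j)=\mathbb{C}e_j$, so each $e_j$ is minimal in $M$; together with completeness of $e_1+e_2$ in $M$ this forces $\mathrm{rank}(M)=2$. Your ``$U(1)$-action rotating the frame'' never gets off the ground because the object you want to rotate around does not exist.

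For rank $\geq 3$ your outline is too schematic to assess as a proof. The claim that ``inner triple automorphisms of $N$ permute best approximations'' needs a concrete automorphism $T$ of $M$ with $T(N)=N$, $T(x)=x$, and $T(\pi_N(x))\neq\pi_N(x)$; you have not indicated how to produce one from the joint Peirce data of an arbitrary $x\in M\setminus N$. The paper's route (already for general JB$^*$-triples, Theorem~\ref{t F rank 3 Cebysev subtriple}) is quite different and substantially more delicate: it first shows that $Q(a)(E)\subseteq F$ for every $a\in F$ that is not Brown--Pedersen quasi-invertible (Proposition~\ref{prop inner ideals}), then uses three mutually orthogonal elements $a,b,c$ to prove $L(a_1,a_2)L(b_1,b_2)(E)\subseteq F$ (Lemma~\ref{l Peirce 1}), passes to the bidual via strong$^*$-limits to obtain the Peirce-2 and Peirce-1 spaces of $r(a)+r(b)$ inside $F^{**}$, and handles the Peirce-0 space with a separate argument on compact/closed tripotents (Corollary~\ref{c Peirce zero of a range tripotent}). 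None of this is visible in your sketch; in the JBW$^*$ setting the original reference presumably shortcuts some of it, but the mechanism is still Proposition~\ref{prop JamPeSiddTahCeby}, not ad hoc non-uniqueness constructions.

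In short: isolate and prove the lemma $E_0(e)=F_0(e)$ (and $E_2(e)=F_2(e)$ when $F_0(e)\neq 0$) first; it is the engine behind cases (b) and (c), and your current plan for (c) does not work without it.
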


The question whether in the above theorem JBW$^*$-triples and subtriples can be replaced with JB$^*$-triples and subtriples remains as an open problem. The techniques employed in \cite{JamPeSiTah2014Ceby} rely heavily on the rich geometric properties of JBW$^*$-triples. In this note we study this problem in the more general setting of JB$^*$-triples. We combine here new arguments involving inner ideals and compact tripotents in the bidual of a JB$^*$-triple. The main result of this note shows that the conclusion of the above Theorem \ref{t Cebysev  JBW*-subtriples JPST} also holds when $N$ is a JB$^*$-subtriple of a general JB$^*$-triple $M$ (see Theorem \ref{t Cebysev JB*-subtriples JPST}).\smallskip

Among the new results proved in this note we also establish that a \v{C}eby\v{s}\"{e}v C$^*$-subalgebra $B$ (respectively, a \v{C}eby\v{s}\"{e}v JB$^*$-subtriple) of a C$^*$-algebra $A$ with rank$(B)\geq 3$ coincides with the whole $A$ (see Corollary \ref{c t F rank 3 Cebysev subtriple}).

\section{Preliminaries}\label{sec:preliminaries}

The multiple attempts to understand a Riemann mapping theorem type for complex Banach spaces of dimension bigger or equal than 2, led some mathematicians to the study of bounded symmetric domains (compare \cite{Cartan35, Loos77, Harr74, Harr81} and \cite{Ka83}). The definite answer was given by W. Kaup, who showed the existence of a set of algebraic-geometric-analytic axioms which determine a class of complex Banach spaces, the class of JB$^*$-triples, whose open unit balls are bounded symmetric domains, and every bounded symmetric domain in a complex Banach space is biholomorphically equivalent to the open unit ball of a
JB$^*$-triple; in this way, the category of all bounded symmetric domains with base point is equivalent to the category of JB$^*$-triples.\smallskip

A \emph{JB$^{*}$-triple} is a complex a Banach space $E$ with a continuous triple product $(a,b,c) \mapsto \{a,b,c\}$, which is bilinear and symmetric in the external variables and conjugate linear in the middle one and satisfies:
\begin{enumerate}[$(a)$]\item (Jordan identity)
$$L(x,y)\{a,b,c\}=\{L(x,y)a,b,c\}-\{a,L(y,x)b,c\}+\{a,b,L(x,y)c\},$$ for all $x,y,a,b,c\in E$, where $L(x,y):E\rightarrow E$ is the linear
mapping given by $L(x,y)z=\{x,y,z\}$;
\item  For each $x\in E$, the operator $L(x,x)$ is hermitian with non-negative spectrum;
\item $\left\Vert \{x,x,x\}\right\Vert =\left\Vert x\right\Vert ^{3}$ for all $
x\in E$.
\end{enumerate}

Given an element $a$ in a JB$^*$-triple $E$, the symbol $Q(a)$ will denote the conjugate linear map on $E$ defined by $Q(a) (x) := \{a,x,a\}$.\smallskip

The class of JB$^*$-triples includes all C$^*$-algebras when the latters are equipped with the triple product given by \begin{equation}\label{eq ternary product on C*-algebras} \{a,b,c\} = \frac12 (a b^* c+ c b^* a).
\end{equation} The space $B(H,K)$ of all bounded linear operators between complex Hilbert spaces, although rarely is a C$^*$-algebra, is a JB$^*$-triple with the product defined in \eqref{eq ternary product on C*-algebras}. In particular, every complex Hilbert space is a JB$^*$-triple. Thus, the class of JB$^*$-triples is strictly wider than the class of C$^*$-algebras.\smallskip

A JBW$^*$-triple is a JB$^*$-triple which is also a dual Banach space (with a unique isometric predual \cite{Barton}). The triple product of every JBW$^*$-triple
is separately weak$^*$ continuous (cf. \cite{Barton}). The second dual, $E^{**},$ of a JB$^*$-triple, $E,$ is a JBW$^*$-triple with a certain triple product extending the product of $E$ (cf. \cite{Di86b}).\smallskip

The study of \v{C}eby\v{s}\"{e}v JB$^*$-subtriples of a given JB$^*$-triple requires some other examples which are very well known. A Cartan factor of type 1 is a JB$^*$-triple which coincides with the Banach space $B(H, K)$ of bounded linear operators between two complex Hilbert spaces, $H$ and $K$, where the triple product is defined by \eqref{eq ternary product on C*-algebras}. Cartan factors of types 2 and 3 are JB$^*$-triples which can be identified the subtriples of $B(H)$ defined by $II^{\mathbb{C}} = \{ x\in B(H) : x=- j x^* j\} $ and $III^{\mathbb{C}} = \{ x\in B(H) : x= j x^* j\}$, respectively, where $j$ is a conjugation on $H$. A Cartan factor of type 4 or $IV$ is a spin factor, that is, a complex Hilbert space provided with
a conjugation $x \mapsto \overline{x}$, where the triple product and the norm are defined by $$\J x y z
= \langle x / y \rangle z + \langle z / y \rangle x - \langle x /
\bar z \rangle \bar y,$$ and $\| x\|^2=\langle x / x
\rangle+\sqrt {\langle x / x \rangle^2-|\langle x / \overline x
\rangle|^2}$, respectively. The Cartan factors of types 5 and 6 consist of finite dimensional spaces of matrices over the eight dimensional complex Cayley division algebra $\mathbb{O}$; the type $VI$ is the space of all hermitian $3$x$3$ matrices over $\mathbb{O}$, while the type $V$ is the subtriple of $1$x$2$ matrices with entries in $\mathbb{O}$ (compare \cite{Loos77}, \cite{FriRu86}, and \cite[\S 2.5]{Chu}).\smallskip

Let $E$ be a JB$^*$-triple. An element $e\in E$ is called a \emph{tripotent} if $\J eee =e$. For each tripotent $e\in E$, the eigenspaces of the operator $L(e,e)$ induce a decomposition (called \emph{Peirce decomposition}) of $E$ in the form
$$E= E_{2} (e) \oplus E_{1} (e) \oplus E_0 (e),$$ where for $i=0,1,2,$ $E_i (e)=\{ x\in E : L(e,e) (x) = \frac{i}{2} x\}$ (compare \cite[Theorem 25]{Loos77}). The natural projections of $E$ onto $E_i(e)$ will be denoted by $P_i(e)$.  It is known that this decomposition satisfies the following multiplication rules:
 $$\J {E_{i}(e)}{E_{j}
(e)}{E_{k} (e)}\subseteq E_{i-j+k} (e),$$ if $i-j+k \in \{
0,1,2\}$ and is zero otherwise. In addition, $$\J {E_{2}
(e)}{E_{0}(e)}{E} = \J {E_{0} (e)}{E_{2}(e)}{E} =0.$$

A tripotent $e$ in $E$ is called \emph{complete} (respectively, \emph{minimal}) if the equality $E_0(e)=0$ (respectively, $E_2(e)=\mathbb{C} e \neq \{0\}$) holds.\smallskip

The connections between JB$^*$-triples and JB$^*$-algebras are very deep. Every JB$^*$-algebra is a JB$^*$-triple under the triple product defined \begin{equation}\label{eq JB*-alg-triple product} \J xyz = (x\circ y^*) \circ
z + (z\circ y^*)\circ x - (x\circ z)\circ y^*.
\end{equation} The Peirce space $E_2 (e)$ is a JB$^*$-algebra with product
$x\circ_e y := \J xey$ and involution $x^{\sharp_e} := \J exe$.
\smallskip

Let $a$ be an element in a JB$^*$-triple $E$. It is known that the JB$^*$-subtriple, $E_{a}$, generated by $a$, identifies with some $C_0(L_a)$ where $\|a\|\in L_a\subseteq [0,\|a\|]$ with $L_a\cup \{0\}$ compact (cf. \cite[1.15]{Ka83}). Moreover, there exists a triple isomorphism $\Psi: E_{a} \to C_0(L_a)$ such that $\Psi (a) (t) =t$. Consequently, the symbol $a^{\frac{1}{3}}$ will stand for the unique element in $E_a$ satisfying $\{a^{\frac{1}{3}},a^{\frac{1}{3}},a^{\frac{1}{3}}\} = a$.\smallskip

When $a$ is an element in a JBW$^{*}$-triple $M$, the sequence $(a^{\frac{1}{2n-1}})$ converges in the weak$^{\ast}$-topology
of $M$ to a tripotent, denoted by $r(a)$, called the \textit{range tripotent of} $a$. The tripotent $r(a)$ is the smallest tripotent $e\in M$ satisfying
that $a$ is positive in the JBW$^{\ast }$-algebra $M_{2}(e)$ (see \cite[page 322]{Edwards}). Clearly, the range tripotent $r(a)$ can be identified with the characteristic function $\chi_{_{(0,\|a\|]\cap L_a}}\in C_0(L_a)^{**}$ (see \cite[beginning of \S 2]{BuChuZa2000}).\label{page triple functional calculus} If we define $a^{[1]} =a$, $a^{[3]} = \{a,a,a\}$ and $a^{[2n+1]} = \{a,a,a^{[2n-1]}\}$, for every $n\geq 1$, it is known that the sequence $(a^{[2n -1]})$ converges in the weak$^*$ topology of $M$ to a tripotent (called the \hyphenation{support}\emph{support} \emph{tripotent} of $a$) $s(a)$ in $E^{**}$, which satisfies $ s(a) \leq a \leq r(a)$ in the JBW$^*$-algebra $M_2 (r(a))$ (compare \cite[Lemma 3.3]{Edwards}).\smallskip

We recall that two elements $a, b$ in a JB$^*$-triple $E$ are \emph{orthogonal} (written as $a\perp b$) if $L(a, b) = 0$ (see \cite[Lemma 1]{BurFerGarMarPe08} for several equivalent reformulations). Given a subset $M\subseteq E$, we write $M_{E}^{\perp }$
(or simply $M^{\perp }$) for the (orthogonal) annihilator of $M$ defined by
$M_{E}^{\perp }=\{y\in E:y\perp x,\forall x\in M\}$. If $e\in E$ is a tripotent,
then $\{e\}_{E}^{\perp }=$ $E_{0 }(e)$, and $\{a\}_{E}^{\perp }=$ $(E^{\ast \ast })_{0 }(r(a))\cap
E$, for every $a\in E$ (cf. \cite[Lemma 3.2]{Burgos2}).\smallskip

Given a tripotent $e\in E$, we know from Lemma 1.3$(a)$ in \cite{FriRu85} that $$\|x_2 + x_0\| = \max \{ \|x_2\|, \|x_0\| \},$$ for every $x_2\in E_2 (e) $ and every $x_0\in E_0 (e).$ This geometric property with the equivalent reformulations of orthogonality given in \cite[Lemma 1]{BurFerGarMarPe08} we deduce that \begin{equation}\label{eq orthogonal are M-orthogonal} \|a + b\|  = \max \{ \|a\|, \|b\| \},
\end{equation} whenever $a$ and $b$ are orthogonal elements in a JB$^*$-triple. A subset $S \subseteq E$ is said to be \emph{orthogonal} if $0 \notin S$ and $x \perp y$ for every $x \neq y$ in $S$. The minimal cardinal number $r$ satisfying $card(S) \leq r$ for every orthogonal subset $S \subseteq E$ is called the \emph{rank} of $E$ (and will be denoted by $r(E)$). Given a tripotent $e\in E$,  the rank of the Peirce-2 subspace $E_2(e)$ will be called the rank of $e$.\smallskip

We shall also make use of a natural partial order defined on the set of tripotents (see Corollary 1.7 and comments preceding it in \cite{FriRu85}). Given two tripotents $e,u$ in a JB$^*$-triple $E$, we say that $e\leq u$ if $u-e$ is a tripotent in $E$ with $u-e\perp e$.\smallskip

We finally, recall that an element $x$ in a JB$^*$-triple $E$ is called \emph{Brown-Pedersen quasi-invertible} (BP quasi-invertible for short) if there exists $y\in E$ such that $B(x,y)=0$ (cf. \cite{JamSiddTah2013}), where $B(x,y)$ denotes the Bergmann operator $B(x,y)=I_E-2L(x,y)+Q(x)Q(y)$. Theorems 6 and 11 in \cite{JamSiddTah2013} prove that an element $x$ in $E$ is Brown-Pedersen quasi-invertible if, and only if, $x$ is von Neumann regular in the sense of \cite{FerGarSanMo92,Ka96,BurKaMoPeRa} and its range tripotent is an extreme point of the closed unit ball of $E$, equivalently, there exists a complete tripotent $v\in E$ such that $x$ is positive and invertible in $E_2(v)$. In particular, every extreme point of the closed unit ball of $E$ is BP quasi-invertible.\smallskip

\section{\v{C}eby\v{s}\"{e}v subtriples of JB$^*$-triples}\label{sec: Cebysev JB*-subtriples}

The following auxiliary results were established in \cite[\S 3]{JamPeSiTah2014Ceby}

\begin{prop}\label{prop JamPeSiddTahCeby}\cite[Propositions 9 and 10]{JamPeSiTah2014Ceby} Let $F$ be a \v{C}eby\v{s}\"{e}v  JB$^*$-subtriple of a JB$^*$-triple $E$. Suppose $e$ is a non-zero tripotent in $F$.
Then the following statements hold: \begin{enumerate}[$(a)$]\item $E_0 (e) = F_0 (e)$, and consequently, every complete tripotent in $F$ is complete in $E$.
\item If $F_0(e) = \{e\}^{\perp}_{F}\neq 0$, then $E_2 (e) = F_2 (e)$.$\hfill\Box$
\end{enumerate}
\end{prop}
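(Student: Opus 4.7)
The plan is to exploit the uniqueness of the best approximation in $F$ together with two structural facts: the Peirce-$2$ and Peirce-$0$ subspaces are mutually orthogonal, so $\|x_2+x_0\|=\max\{\|x_2\|,\|x_0\|\}$ for $x_i\in E_i(e)$ (equation \eqref{eq orthogonal are M-orthogonal}), and the Peirce projections $P_2(e)$ and $P_0(e)$ are contractive. Since $F$ is a subtriple containing $e$, its Peirce decomposition with respect to $e$ is inherited from $E$; in particular $F_i(e)=F\cap E_i(e)$, so $F_0(e)\subseteq E_0(e)$ and $F_2(e)\subseteq E_2(e)$ hold tautologically. The content of the proposition is the reverse inclusions, which I would extract from the \v{C}eby\v{s}\"{e}v uniqueness applied to carefully chosen test elements.

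\textbf{Proof of (a).} Fix a non-zero $x\in E_0(e)$, let $z=\pi_F(x)$, and decompose $z=z_2+z_1+z_0$ with $z_i\in F_i(e)$. The first step is to kill $z_1$: the competitor $z_2+z_0\in F$ satisfies
$$\|x-(z_2+z_0)\|=\max\{\|z_2\|,\|x-z_0\|\}\leq\|x-z\|,$$
the equality by the $E_2\perp E_0$ decomposition and the inequality by the contractivity of $P_2(e)$ and $P_0(e)$ applied to $x-z$; uniqueness forces $z=z_2+z_0$. Feeding the competitor $z_0\in F$ back in yields $\|x-z_0\|\leq\|x-z\|$, so $z_2=0$; hence $z=z_0^{\ast}$, the unique best approximation of $x$ in $F_0(e)$, at distance $d:=\|x-z_0^{\ast}\|$. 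The conclusion: for every $t\in\mathbb{C}$ with $|t|\leq d$ the element $te+z_0^{\ast}$ lies in $F$, and since $te\perp(x-z_0^{\ast})$ one has $\|x-(te+z_0^{\ast})\|=\max\{|t|,d\}=d$, so $te+z_0^{\ast}$ is again a best approximation. Uniqueness collapses the disk $\{t:|t|\leq d\}$ to a single point, forcing $d=0$ and $x=z_0^{\ast}\in F_0(e)$. The remaining assertion, that complete tripotents of $F$ remain complete in $E$, is immediate: $F_0(e)=\{0\}$ yields $E_0(e)=\{0\}$.

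\textbf{Proof of (b).} With $F_0(e)\neq\{0\}$, pick a non-zero $y\in F_0(e)$ and mirror the argument of (a), swapping the roles of Peirce-$2$ and Peirce-$0$. Given $x\in E_2(e)$, let $z=\pi_F(x)=z_2+z_1+z_0$. The competitor $z_2+z_0\in F$ kills $z_1$ by the same orthogonality-plus-contractivity estimate, and then the competitor $z_2\in F$ kills $z_0$; so $z=z_2^{\ast}$ is the unique best approximation of $x$ in $F_2(e)$ at distance $d=\|x-z_2^{\ast}\|$. The continuum argument is now run on the Peirce-$0$ side using $y$: for every $t\in\mathbb{C}$ with $|t|\,\|y\|\leq d$, the element $z_2^{\ast}+ty\in F$ is a best approximation (distance $\max\{d,|t|\,\|y\|\}=d$), so uniqueness collapses the disk to $\{0\}$, forcing $d=0$ and $x=z_2^{\ast}\in F_2(e)$.

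\textbf{Main obstacle.} The delicate point in both parts is the elimination of the Peirce-$1$ component of the best approximation, since Peirce-$1$ is not orthogonal to either Peirce-$2$ or Peirce-$0$ and the clean max-norm decomposition fails for it. This is circumvented by simply exhibiting the competitor obtained from dropping the Peirce-$1$ piece and controlling the norm through the contractivity of $P_2(e)$ and $P_0(e)$; afterwards the \v{C}eby\v{s}\"{e}v uniqueness kills $z_1$ at no extra cost, and the remainder of each part reduces to the visible one-parameter family of candidate approximations generated by the scalar multiples of $e$ in (a) and of $y$ in (b).
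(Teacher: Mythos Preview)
The paper does not supply its own proof of this proposition: it is quoted from \cite[Propositions 9 and 10]{JamPeSiTah2014Ceby} and closed with a box, so there is nothing in the present paper to compare against directly. That said, your argument is correct and is precisely the natural one; it uses only the two ingredients the paper itself highlights in the preliminaries, namely the contractivity of the Peirce projections and the $M$-orthogonality \eqref{eq orthogonal are M-orthogonal} of $E_2(e)$ and $E_0(e)$, together with the uniqueness of the best approximation. Each step checks: the elimination of $z_1$ via the competitor $z_2+z_0$ works because $P_2(e)(x-z)$ and $P_0(e)(x-z)$ recover $-z_2$ and $x-z_0$ (respectively $x-z_2$ and $-z_0$ in part~(b)); the subsequent elimination of the remaining ``wrong'' Peirce piece is immediate; and the one-parameter family $te+z_0^{\ast}$ in (a) (respectively $z_2^{\ast}+ty$ in (b)) forces $d=0$ exactly as you say. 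This is almost certainly the same argument as in the cited source, since no alternative route suggests itself with these tools.
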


We continue, in this paper, our study on \v{C}eby\v{s}\"{e}v subtriples of general JB$^*$-triples.\smallskip

We recall that orthogonal elements in $E$ are always $M$-orthogonal, i.e. $a\perp b$ in $E$ implies that $\|\alpha a + \beta b\| = \max\{\|\alpha a \|, \| \beta b\| \},$ for all $\alpha,\beta\in \mathbb{C}$ (see, for example, \cite[Lemma 1]{BurFerGarMarPe08} and \cite[Lemma 1.3]{FriRu85}).

\begin{prop}\label{prop inner ideals} Let $F$ be a \v{C}eby\v{s}\"{e}v  JB$^*$-subtriple of a JB$^*$-triple $E$. For each $a$ in $F$ which is not Brown-Pedersen quasi-invertible the norm closure of the space $Q(a) (E)$ is contained in $F$.
\end{prop}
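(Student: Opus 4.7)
The plan is to show that $Q(a)(y)=\{a,y,a\}\in F$ for every $y\in E$; since $F$ is norm-closed this suffices. Let $e:=r(a)\in E^{**}$ be the range tripotent of $a$. As $a\in F$ and $e$ is the weak$^*$-limit of the sequence $a^{1/(2n-1)}\in E_a\subseteq F$, the tripotent $e$ already lies in the weak$^*$-closed subtriple $F^{**}\subseteq E^{**}$. Since $a\in (E^{**})_2(e)$, the Peirce multiplication rules force $\{a,y_1,a\}=\{a,y_0,a\}=0$ for the $(E^{**})_1(e)$- and $(E^{**})_0(e)$-components of $y$, so that
\[
Q(a)(y)=\{a,y_2,a\}\in E\cap (E^{**})_2(e).
\]
The task then reduces to showing that $E\cap (E^{**})_2(e)\subseteq F$.

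The clean case is when $a$ is von Neumann regular in $E$, that is, $e\in E$. By continuous triple functional calculus $e\in E_a\subseteq F$, so $e$ is a non-zero tripotent of $F$. If $e$ were complete in $E$ then, since $a$ is positive in the JB$^*$-algebra $E_2(e)$ with range tripotent equal to the unit of $E_2(e)$ and von Neumann regularity keeps $0$ out of its spectrum, $a$ would be positive and invertible in $E_2(e)$; by the characterisation recalled at the end of Section \ref{sec:preliminaries} this would make $a$ Brown--Pedersen quasi-invertible, contradicting the hypothesis. Hence $E_0(e)\neq 0$. Proposition \ref{prop JamPeSiddTahCeby}$(a)$ then gives $F_0(e)=E_0(e)\neq 0$, and part $(b)$ upgrades this to $E_2(e)=F_2(e)\subseteq F$. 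Since $e\in E$ we have $E\cap (E^{**})_2(e)=E_2(e)$, and the required containment is established.

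When $a$ is not von Neumann regular, $e\in E^{**}\setminus E$ and Proposition \ref{prop JamPeSiddTahCeby} cannot be applied to $e$ directly. My plan is to approximate: using the continuous functional calculus $E_a\cong C_0(L_a)$, produce $b_n=f_n(a)\in E_a\subseteq F$ with $f_n\to\mathrm{id}_{L_a}$ uniformly and each $b_n$ von Neumann regular in $E$ with $r(b_n)\in F$ failing to be complete in $E$. The previous case then yields $Q(b_n)(y)\in F$ for every $y\in E$, and the continuity estimate $\|Q(b_n)-Q(a)\|\to 0$ combined with the closedness of $F$ forces $Q(a)(y)\in F$. The main obstacle is this construction: it requires cutoff functions supported on clopen subsets of $L_a$ which are bounded away from $0$, and such subsets may fail to exist (for instance when $L_a=(0,\|a\|]$). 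In that situation the fallback is to work directly in the bidual, exploiting that $e$ is a compact tripotent of $F^{**}$, and to establish an analogue of Proposition \ref{prop JamPeSiddTahCeby} for the compact tripotents of $F^{**}$ arising as range tripotents of elements of $F$; this would give $E\cap (E^{**})_2(e)=F\cap (F^{**})_2(e)\subseteq F$ and complete the proof.
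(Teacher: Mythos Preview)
Your treatment of the von Neumann regular case is correct and essentially coincides with the paper's: once $e=r(a)\in F$ is a tripotent that is not complete, Proposition~\ref{prop JamPeSiddTahCeby}$(b)$ yields $E_2(e)=F_2(e)$ and hence $Q(a)(E)\subseteq F$.

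The non-regular case, however, is not proved. You correctly diagnose that the approximation by von Neumann regular $b_n\in E_a$ can fail: when $L_a$ is connected (e.g.\ $L_a=(0,1]$) any continuous $f_n\in C_0(L_a)$ has connected range, so if it vanishes somewhere and is positive elsewhere its range accumulates at $0$, and $b_n=f_n(a)$ is again not regular. Your fallback is only a programme, and it has two problems. First, the range tripotent $r(a)$ is \emph{open} in $E^{**}$, not compact; it is the support tripotent $s(a)$ that is compact-$G_\delta$, so the compact-tripotent technology does not apply to $e=r(a)$ as stated. Second, the bidual analogues of Proposition~\ref{prop JamPeSiddTahCeby} that the paper eventually establishes (Corollaries~\ref{c Peirce zero of a compact tripotent} and~\ref{c Peirce zero of a range tripotent}) are proved \emph{after} and, in the second case, \emph{using} the present proposition, so appealing to them here would be circular.

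The paper's argument in the non-regular case stays entirely inside $E$ and uses the \v{C}eby\v{s}\"{e}v property directly. The point is not to exploit non-regularity to manufacture regular approximants, but to exploit the fact that $0$ is a non-isolated point of $L_a$ to manufacture, for each $n$, a non-zero $z_n\in E_a$ supported near $0$ together with $a_n,b_n\in E_a$ supported away from $z_n$, with $\|a-a_n\|\to 0$ and $\{b_n,a_n,b_n\}=a_n$. For $w\in Q(a_n)(E)$ one has $Q(b_n)^2 w=w$; contractivity of $Q(b_n)^2$ and uniqueness of $\pi_F(w)$ force $\pi_F(w)=Q(b_n)^2\pi_F(w)$, hence $z_n\perp w-\pi_F(w)$. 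Then $M$-orthogonality gives $\|w-(\lambda z_n+\pi_F(w))\|=\operatorname{dist}(w,F)$ for all small $\lambda$, contradicting uniqueness unless $w\in F$. Thus $Q(a_n)(E)\subseteq F$ for every $n$, and $Q(a)(E)\subseteq F$ by continuity. This orthogonal-witness idea is what is missing from your attempt.
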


\begin{proof} Since $a$ is in $F\backslash F_q^{-1}$ we have two possibilities either $a$ is not von Neumann regular or $a$ is von Neumann regular and its range tripotent is not an extreme point of the closed unit ball of $F$. We deal with each case separately. We can assume that $\|a\|=1$.\smallskip

Suppose first that $a$ is not von Neumann regular. Then $0$ is a non-isolated point in the triple spectrum $L_{a}$ of $a$. We know that in this case, $0,1\in L_a\subseteq [0,1]$, with $L_a$ compact. We further know that, if $F_a$ denotes the JB$^*$-subtriple of $F$ generated by $a$, then there exists a triple isomorphism $\varPsi$ from $F_a$ onto $C_{0}(L_a)$, satisfying $\varPsi(a) (t) = t$ $(t\in L_a),$ where $C_0(L_a)$ denotes the commutative C$^*$-algebra of all continuous functions on $L_a$ vanishing at zero (cf. \cite[Lemma~1.14]{Ka83} and \cite[\S 3]{Ka96}).\smallskip

Fix a natural $n$, and define the following functions in $F_a$, defined as elements in $C_0(L_a)$, $$b_{n} (t):=\left\{%
\begin{array}{ll}
    0, & \hbox{if $t\in [0,\frac1{3n} ]$;} \\
    \hbox{affine}, & \hbox{if $t\in [\frac1{3n},\frac1{2n} ]$;} \\
    1, & \hbox{if $t\in [\frac1{2n},1 ]$} \\
\end{array}%
\right. a_{n} (t):=\left\{%
\begin{array}{ll}
    0, & \hbox{if $t\in  [0,\frac1{2n} ]$;} \\
    \hbox{affine}, & \hbox{if $t\in [\frac1{2n},\frac1{n} ]$;} \\
    t, & \hbox{if $t\in  [\frac1{n},1 ]$} \\
\end{array}%
\right.$$ and $$z_{n} (t):=\left\{%
\begin{array}{ll}
    0, & \hbox{if $t=0$;} \\
    1, & \hbox{if $t=\frac1{5n}$;} \\
    \hbox{affine}, & \hbox{otherwise;} \\
    0, & \hbox{if $t\in [\frac1{4n},1]$.} \\
\end{array}%
\right.$$ Clearly $\|a_n\|=\|z_n\|=\|b_n\|=1$, $\|a-a_n\| =\frac1{2n}$, $\{b_n,a_n,b_n\}=a_n.\,$ and $z_n\perp a_n,b_n$, for every $n\in \mathbb{N}$. We claim that \begin{equation}\label{eq an E in F}
\{a_n,E,a_n\}\subseteq F,
\end{equation} for every natural $n$. Suppose, on the contrary, that there exists an element $w\in \{a_n,E,a_n\}\backslash F$. Since in $F_a^{**}$ (and hence in $F^{**}$) $b_n = r(a_n) + (b_n -r(a_n))$, where $r(a_n) \perp (b_n -r(a_n))$ and $r(a_n)$ is the range tripotent of $a_n$ in $F^{**}$, and $w\in\{a_n,E,a_n\}$, we can easily see that $Q(b_n)^2 (w) =w$. The element $\pi_{_F} (w)$ lies in $F$, and thus $Q(b_n)^2 (\pi_{_F} (w))\in F$. We observe that $$\left\| w - Q(b_n)^2 (\pi_{_F} (w))\right\| = \left\| Q(b_n)^2 ( w - \pi_{_F} (w))\right\|\leq \left\|  w - \pi_{_F} (w) \right\|=\hbox{dist} (w, F).$$ The uniqueness of the best approximation of $w$ in $F$ implies that $\pi_{_F} (w) = Q(b_n)^2 (\pi_{_F} (w))$, equality which implies that $z_n\perp Q(b_n)^2 (\pi_{_F} (w)) =\pi_{_F} (w)$. We therefore have $z_n\perp w+ Q(b_n)^2 (\pi_{_F} (w))$ (because $w=Q(b_n)^2 (w)\perp z_n$). For each $\lambda\in \mathbb{C}$, the element $\lambda z_n + \pi_{_F} (w)$ belongs to $F$, and since orthogonal elements are $M$-orthogonal, we have $$\left\| w- (\lambda z_n + \pi_{_F} (w)) \right\| = \left\| - \lambda z_n + (w- \pi_{_F} (w)) \right\|$$ $$ =\max\{ \|w- \pi_{_F} (w)\|, \| \lambda z_n \|\} =\hbox{dist} (w,F),$$ for every $|\lambda|\leq \|w- \pi_{_F} (w)\| =\hbox{dist} (w,F),$ which contradicts the uniqueness of $\pi_{_F} (w)$. This proves the claim.\smallskip

Now, since $(a_n)\to a$ in norm, the triple product of $E$ is norm continuous, and $F$ is norm closed, we deduce from \eqref{eq an E in F} that ${\{a_n,E,a_n\}}\subseteq F$, and hence $\overline{\{a_n,E,a_n\}}\subseteq F$.\smallskip

Suppose now that $a$ is von Neumann regular but $r(a)\notin \partial_e (F_1),$ that is, $r(a)\in F$ is not a complete tripotent, or equivalently, $F_0(r(a))\neq \{0\}$. By Proposition \ref{prop JamPeSiddTahCeby}$(b)$ (cf. \cite[Proposition 10]{JamPeSiTah2014Ceby}) we have $E_2(r(a)) =F_2(r(a))\subseteq F.$ We note that $a$ also is von Neumann regular in $E$. Finally, it is known that for a von Neumann regular element $a$ in a JB$^*$-triple $E$ we have $Q(a) (E) =Q(r(a)) (E) =E_2 (r(a))$ (cf. \cite[comments after Lemma 3.2]{Kaup01} or \cite[p. 192]{BurKaMoPeRa}). This shows that $Q(a) (E) = E_2 (r(a)) =F_2(r(a))\subseteq F.$
\end{proof}

A (closed) subtriple $I$ of a JB$^*$-triple $E$ is said to be a \emph{triple ideal} or imply an \emph{ideal} of $E$ if $\J EEI + \J EIE\subseteq I$. If we only have $\J IEI \subseteq I$ we say that $I$ is an \emph{inner ideal} of $E$. Following standard notation, given an element $a$ in $E$, we denote by $E(a)$ the norm-closure of $Q(a) (E) = \{a,E,a\}$ in $E$. It is known that $E(a)$ is precisely the norm-closed inner ideal of $E$ generated by
$a$  (cf. \cite{BuChuZa2000}).\label{inner ideal} 
\smallskip

As we commented above, let $E_a$ the JB$^*$-subtriple of $E$ generated by $a$. Clearly, $a^{[3]} = Q(a) (a)\in E_a\subseteq E(a)$, and hence $E(a^{[3]}) \subseteq E(a)$. By the Gelfand theory for JB$^*$-triples, there exist a subset $L_a\subseteq [0,\|a\|]$, with $L_a$ compact and $\|a\|\in L_a$, and a triple isomorphism $\varPsi$ from $E_a$ onto $C_{0}(L_a)$, satisfying $\varPsi(a) (t) = t$ $(t\in L_a)$ (cf. \cite[Lemma~1.14]{Ka83}). By the Stone-Weierstrass theorem, we know that $E_{a} = E_{a^{[3]}}$. Therefore $a\in E_{a^{[3]}}\subseteq E(a^{[3]}),$ which implies that $E(a^{[3]}) \supseteq E(a).$ Therefore $E(a^{[3]}) = E(a).$
\smallskip

The previous Proposition \ref{prop inner ideals} implies that, if $F$ is a \v{C}eby\v{s}\"{e}v JB$^*$-subtriple of a JB$^*$-triple $E$, then for each  non Brown-Pedersen quasi-invertible element $a$ in $F$, the inner ideal of $E$ generated by $a$ is contained in $F$. We can actually prove a stronger conclusion.

\begin{corollary}\label{c prop inner ideals} Let $F$ be a \v{C}eby\v{s}\"{e}v  JB$^*$-subtriple of a JB$^*$-triple $E$. For each $a$ in $F$ which is not Brown-Pedersen quasi-invertible in $F$ the inner ideal generated by $a$ in $E$ coincides with the inner ideal of $F$ generated by $a$, that is, $E(a)=F(a)$.
\end{corollary}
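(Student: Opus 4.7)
The plan is to prove the two inclusions separately. The inclusion $F(a)\subseteq E(a)$ is immediate from the definitions: since $F\subseteq E$, we have $Q(a)(F)\subseteq Q(a)(E)$, and passing to norm-closures yields $F(a)=\overline{Q(a)(F)}\subseteq \overline{Q(a)(E)}=E(a)$.

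The substantive inclusion is $E(a)\subseteq F(a)$. Here I would exploit two ingredients already in place. The first is the identity $E(a)=E(a^{[3]})$ established in the paragraph immediately preceding the corollary (the same Gelfand argument applied inside $F$ also gives $F(a)=F(a^{[3]})$). The second is the fundamental formula for Jordan triples,
\[
Q\bigl(Q(a)b\bigr)=Q(a)\,Q(b)\,Q(a),
\]
which is a standard consequence of the Jordan identity listed among the axioms in Section \ref{sec:preliminaries}. Specializing to $b=a$ and recalling $a^{[3]}=Q(a)(a)$, I obtain $Q(a^{[3]})=Q(a)^{3}$. Therefore
\[
Q(a^{[3]})(E)=Q(a)\Bigl(Q(a)\bigl(Q(a)(E)\bigr)\Bigr).
\]

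At this point I would invoke Proposition \ref{prop inner ideals}, which gives $Q(a)(E)\subseteq F$. Because $F$ is a JB$^{*}$-subtriple containing $a$, it is closed under $Q(a)$, so $Q(a)\bigl(Q(a)(E)\bigr)\subseteq Q(a)(F)\subseteq F$, and applying $Q(a)$ a third time,
\[
Q(a^{[3]})(E)\subseteq Q(a)(F).
\]
Taking norm-closures yields
\[
E(a)=E(a^{[3]})=\overline{Q(a^{[3]})(E)}\subseteq \overline{Q(a)(F)}=F(a),
\]
and combined with the trivial inclusion this gives $E(a)=F(a)$.

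I do not anticipate a genuine obstacle: the whole argument reduces to combining Proposition \ref{prop inner ideals} with the fundamental formula and the identity $E(a)=E(a^{[3]})$, both of which are standard in the JB$^{*}$-triple literature. The only delicate point is the bookkeeping for the three successive applications of $Q(a)$, in order to actually \emph{land} inside $Q(a)(F)$ (not merely inside $F$), which is what is needed to conclude that the closure is contained in $F(a)$ rather than just in $F$.
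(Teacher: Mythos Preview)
Your proof is correct and follows essentially the same route as the paper's own argument: both use Proposition~\ref{prop inner ideals} to get $Q(a)(E)\subseteq F$, then exploit $Q(a^{[3]})=Q(a)^{3}$ together with $E(a)=E(a^{[3]})$ to push $Q(a^{[3]})(E)$ into $Q(a)(F)\subseteq F(a)$. The only cosmetic difference is that the paper obtains the inclusion $F(a)\subseteq E(a)$ by observing that $E(a)\subseteq F$ is an inner ideal of $F$ containing $a$, whereas you simply use $Q(a)(F)\subseteq Q(a)(E)$; both are immediate.
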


\begin{proof} Let us observe that $a$ is not Brown-Pedersen quasi-invertible in $F$ if and only if $a^{[3]}$ satisfies the same property. Proposition \ref{prop inner ideals} shows that $E(a)$ $ =\overline{Q(a) (E)}\subseteq F.$ That is, $E(a)$ is an inner ideal of $F$ which contains $a,$ and hence $F(a) \subseteq E(a).$\smallskip

Now, we fix $x\in E$. The same Proposition \ref{prop inner ideals} above implies that $Q(a^{[3]}) (x) = Q(a) Q(a) Q(a) (x)\subseteq Q(a) (F)\subseteq F(a),$ and consequently, $E(a)= E(a^{[3]}) = \overline{ Q(a^{[3]}) (E)} \subseteq F(a).$
\end{proof}

Let us observe that in Theorem \ref{t Cebysev  JBW*-subtriples JPST} cases $(a)$, $(b)$ and $(c)$, the JBW$^*$-subtriple $F$ contains BP quasi-invertible elements. The remaining case $(d)$ motivates us to consider the following partial result to determine the \v{C}eby\v{s}\"{e}v JB$^*$-subtriples of a  general JB$^*$-triple.

\begin{corollary}\label{c cebysev subtriples without BP quasi invertible elements} Let $F$ be a \v{C}eby\v{s}\"{e}v  JB$^*$-subtriple of a JB$^*$-triple $E$. Suppose $F$ contains no BP quasi-invertible elements {\rm(}equivalently, $\partial_e (F_1)=\emptyset${\rm)}. Then $F$ is an inner ideal of $E$. \end{corollary}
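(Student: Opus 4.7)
The plan is to verify that $\{F,E,F\}\subseteq F$, which by the definition recalled on \pageref{inner ideal} is exactly what it means for $F$ to be an inner ideal of $E$. The hypothesis that $\partial_e(F_1)=\emptyset$ means that no element of $F$ is Brown--Pedersen quasi-invertible in $F$, so Proposition~\ref{prop inner ideals} applies to every single $a\in F$, yielding
\[
\overline{Q(a)(E)}=\overline{\{a,E,a\}}\subseteq F \qquad \text{for all } a\in F.
\]
In particular $Q(a)(x)\in F$ for every $a\in F$ and every $x\in E$.

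The first step is to pass from the quadratic terms $Q(a)(x)$ to the mixed products $\{a,x,b\}$ with $a,b\in F$, $x\in E$, by means of the polarization identity in the outer variables. Since the triple product is symmetric and bilinear in the first and third slots, for $a,b\in F$ one has
\[
Q(a+b)(x)=Q(a)(x)+2\{a,x,b\}+Q(b)(x),
\]
so that
\[
\{a,x,b\}=\tfrac{1}{2}\bigl(Q(a+b)(x)-Q(a)(x)-Q(b)(x)\bigr).
\]
Because $a$, $b$ and $a+b$ all lie in $F$, and by hypothesis none of them is BP quasi-invertible (the entire unit sphere of $F$ consists of non-BP quasi-invertible elements), each of the three summands on the right-hand side belongs to $F$ by Proposition~\ref{prop inner ideals}. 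Since $F$ is a norm-closed linear subspace, the linear combination $\{a,x,b\}$ lies in $F$ as well. This gives $\{F,E,F\}\subseteq F$, so $F$ is an inner ideal of $E$.

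I do not foresee any genuine obstacle: the content is entirely supplied by Proposition~\ref{prop inner ideals}, and the only real manoeuvre is the polarization trick that reduces $\{a,x,b\}$ to $Q(\cdot)(x)$-terms. The one thing to be slightly careful about is that the hypothesis must be invoked simultaneously for $a$, $b$ and $a+b$; this is harmless because it applies to \emph{every} element of $F$ at once.
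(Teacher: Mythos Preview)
Your argument is correct and follows essentially the same route as the paper: apply Proposition~\ref{prop inner ideals} to every element of $F$ (which is legitimate since no element of $F$ is BP quasi-invertible), and then pass from $Q(a)(E)\subseteq F$ to $\{a,E,b\}\subseteq F$ by polarization in the outer variables. The paper's proof simply says ``a standard polarization argument'' at the second step; you have written it out explicitly, but the content is identical.
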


\begin{proof} By hypothesis $F$ contains not BP quasi-invertible elements, then Proposition \ref{prop inner ideals} implies that $\{a,E,a\} \subseteq F$ for every $a\in F$. A standard polarization argument shows that $\{a,E,b\} \subseteq F$ for every $a,b\in F$, witnessing that $F$ is an inner ideal of $E$.
\end{proof}

\begin{lemma}\label{l distance to extreme points in E} Let $F$ be a JB$^*$-subtriple of a JB$^*$-triple $E$. Suppose $F$ contains no BP quasi-invertible elements {\rm(}equivalently, $\partial_e (F_1)=\emptyset${\rm)}. Then for each $e\in \partial_e(E_1)$ we have $\hbox{\rm dist}(e,F)= 1$. If $F$ is a  \v{C}eby\v{s}\"{e}v subspace of $E$, we have $\pi_{_F} (e) =0$, for every $e$ as above.
\end{lemma}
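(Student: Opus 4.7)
The plan is to prove the distance statement by contradiction, from which the \v{C}eby\v{s}\"{e}v assertion follows immediately. The inequality $\hbox{dist}(e,F)\leq\|e\|=1$ is free since $0\in F$, so the task is the reverse estimate $\|e-f\|\geq 1$ for every $f\in F$.

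Suppose, for contradiction, that some $f\in F$ satisfies $\|e-f\|<1$. The first key step is to argue that $f$ must be Brown--Pedersen quasi-invertible in $E$. This uses the principle that the open norm ball of radius $1$ around a complete tripotent is contained in the set of BP quasi-invertible elements of a JB$^*$-triple, which can be extracted from the theory developed in \cite{JamSiddTah2013,BurKaMoPeRa}. The underlying mechanism is the Peirce decomposition $f=f_{2}+f_{1}$ with respect to $e$ (the $E_{0}(e)$-piece vanishes because $e$ is complete); contractivity of $P_{2}(e)$ yields $\|e-f_{2}\|<1$, so $f_{2}$ is invertible in the unital JB$^{*}$-algebra $E_{2}(e)$ by a Neumann-series argument, and this local invertibility lifts to BP quasi-invertibility of $f$ in $E$. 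By the characterisation of BP quasi-invertibility recorded in Section~\ref{sec:preliminaries}, $f$ is then von Neumann regular in $E$, so its range tripotent $r(f)$ lies in $E$ and satisfies $E_{0}(r(f))=\{0\}$---i.e.\ $r(f)$ is a complete tripotent of $E$.

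The second key step is to push $r(f)$ down into $F$ and verify completeness there. Since $r(f)$ is the weak$^{*}$-limit in the bidual of $(f^{1/(2n-1)})\subseteq F$, and the canonical embedding $F^{**}\hookrightarrow E^{**}$ is weak$^{*}$-continuous and isometric, the range tripotent taken inside $F^{**}$ coincides, under this embedding, with the one inside $E^{**}$. Hence $r(f)\in F^{**}\cap E$ within $E^{**}$. Because $F$ is a norm-closed convex subspace of $E$ it is weakly closed (Mazur), from which one checks $F^{**}\cap E=F$; therefore $r(f)\in F$. Finally $F_{0}(r(f))\subseteq F\cap E_{0}(r(f))=\{0\}$, so $r(f)$ is a complete tripotent of $F$ and hence belongs to $\partial_{e}(F_{1})$. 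This contradicts the standing assumption that $F$ contains no BP quasi-invertible element, and the required lower bound follows.

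For the \v{C}eby\v{s}\"{e}v statement, the element $0\in F$ attains the distance ($\|e-0\|=1=\hbox{dist}(e,F)$), so the uniqueness of the best approximation in a \v{C}eby\v{s}\"{e}v subspace forces $\pi_{_{F}}(e)=0$. The main obstacle is the first key step: establishing the implication $\|e-f\|<1\Rightarrow f$ BP quasi-invertible in $E$ when $e$ is a complete tripotent of $E$; the bidual passage for $r(f)$ and the identification $F^{**}\cap E=F$ are essentially formal once this input is in hand.
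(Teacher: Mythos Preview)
Your proof is correct and follows essentially the same route as the paper's: assume $\|e-f\|<1$, use contractivity of $P_2(e)$ to get $P_2(e)(f)$ invertible in the unital JB$^*$-algebra $E_2(e)$, lift this to BP quasi-invertibility of $f$ in $E$ (the paper invokes \cite[Lemma~2.2]{JamPeSiTah2014Quart} for this step), and then derive the contradiction. Where the paper simply writes ``hence BP quasi-invertible in $F$'' you spell the argument out via $r(f)$; your bidual detour works, but it can be shortened, since von Neumann regularity of $f$ already places $r(f)$ in the singly generated subtriple $F_f\subseteq F$, after which $F_0(r(f))\subseteq E_0(r(f))=\{0\}$ is immediate.
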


\begin{proof} Suppose we can find $x\in F$ satisfying $\|e-x\| < 1$. Then $$\left\|e-P_2(e) (x)\right\|= \left\|P_2 (e) (e-x)\right\|\leq \| e-x\|<1.$$ Since $e$ is the unit element of the JB$^*$-algebra $E_2(e)$, we deduce that $P_2 (e) (x)$ is an invertible element in $E_2(e)$. Lemma 2.2 in \cite{JamPeSiTah2014Quart} implies that $x$ is BP quasi-invertible in $E$, and hence BP quasi-invertible in $F$, which is impossible. The second statement is clear because dist$(e,F)=1=\|e\|$.
\end{proof}

The following technical lemma will play an useful role in subsequent results.

\begin{lemma}\label{l surjective linear isometries} Let $V$ be a closed \v{C}eby\v{s}\"{e}v  subspace of a a Banach space $X$. Let $S: X\to X$ be a surjective linear isometry satisfying $S(V), S^{-1} (V) \subseteq V.$ Then $S (\pi_{_V} (x)) = \pi_{_V} (S(x))$, for every $x\in X$.
\end{lemma}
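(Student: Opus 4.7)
The plan is entirely routine: exploit the fact that surjective linear isometries preserve distances to invariant closed subspaces, and then invoke the uniqueness of the best approximation.

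First I would observe that the two inclusions $S(V) \subseteq V$ and $S^{-1}(V) \subseteq V$, together with the bijectivity of $S$, force $S(V)=V$ (indeed $V = S(S^{-1}(V)) \subseteq S(V) \subseteq V$), and consequently $S^{-1}(V)=V$ as well. This is the only place where both inclusions are used.

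Next, fix $x \in X$ and set $y := \pi_{_V}(x) \in V$. Since $S$ is a surjective linear isometry carrying $V$ onto $V$, the change of variable $w = S^{-1}(v)$ gives
\[
\mathrm{dist}(S(x),V) \;=\; \inf_{v \in V} \|S(x)-v\| \;=\; \inf_{w \in V} \|S(x)-S(w)\| \;=\; \inf_{w\in V}\|x-w\| \;=\; \mathrm{dist}(x,V).
\]
On the other hand, $S(y)\in S(V)=V$, and
\[
\|S(x)-S(y)\| \;=\; \|x-y\| \;=\; \mathrm{dist}(x,V) \;=\; \mathrm{dist}(S(x),V),
\]
so $S(y)$ is a best approximation of $S(x)$ in $V$. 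Because $V$ is a Čebyšëv subspace of $X$, the best approximation of $S(x)$ in $V$ is unique, hence $\pi_{_V}(S(x)) = S(y) = S(\pi_{_V}(x))$, which is the claimed identity.

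There is no real obstacle here; the argument is essentially a bookkeeping exercise. The only subtlety worth flagging is that one really does need both hypotheses $S(V)\subseteq V$ and $S^{-1}(V)\subseteq V$ in order to obtain $\mathrm{dist}(S(x),V)=\mathrm{dist}(x,V)$ via the substitution above: without the second inclusion one would only get the inequality $\mathrm{dist}(S(x),V)\le \mathrm{dist}(x,V)$, which is not enough to conclude that $S(y)$ is a best approximation.
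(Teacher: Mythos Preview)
Your proof is correct and follows essentially the same approach as the paper: establish $\mathrm{dist}(S(x),V)=\mathrm{dist}(x,V)$ using the invariance hypotheses, note that $S(\pi_{_V}(x))\in V$ realizes this distance, and conclude by uniqueness of best approximation. The only cosmetic difference is that the paper obtains the distance equality via the pair of inequalities $\mathrm{dist}(x,V)\ge\mathrm{dist}(S(x),V)\ge\mathrm{dist}(S^{-1}S(x),V)$ (applying the one-sided estimate to both $S$ and $S^{-1}$), whereas you first deduce $S(V)=V$ and then use a direct change of variable; your version also spells out the final uniqueness step that the paper leaves implicit.
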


\begin{proof} We claim that $$\hbox{dist} (x,V) = \hbox{dist} (S(x),V),$$ for every $x\in X$. Let us pick $x\in X.$ Since $S$ is a surjective linear isometry, $\hbox{dist} (x,V) = \|x-\pi_{_V} (x) \| =  \|S(x) -S(\pi_{_V} (x)) \|,$ with $S(\pi_{_V} (x))\in V.$ Therefore $$\hbox{dist} (x,V)\geq \hbox{dist} (S(x),V).$$ Applying the same arguments to $S^{-1}$, we deduce that $$\hbox{dist} (x,V)\geq \hbox{dist} (S(x),V) \geq \hbox{dist} (S^{-1} S(x),V) = \hbox{dist} (x,V).$$
\end{proof}

We recall that, given a tripotent $e$ in a JB$^*$-triple $E$ and $\lambda\in \mathbb{C}$ with $|\lambda|=1$, the mapping $$S_{\lambda}: E\to E, S_{\lambda} = \lambda^2 P_2 (e) + \lambda P_1 (e) + P_0 (e)$$ is an isometric triple isomorphism (compare \cite[Lemma 1.1]{FriRu85}). \smallskip

We establish now an strengthened version of Proposition \ref{prop JamPeSiddTahCeby}.

\begin{prop}\label{p orthogonal} Let $F$ be a \v{C}eby\v{s}\"{e}v  JB$^*$-subtriple of a JB$^*$-triple $E$. Suppose that $a$ is a non-zero element in $F$. Then $\{a\}_{E}^{\perp} =\{x\in E : x\perp a\}\subseteq F$.
\end{prop}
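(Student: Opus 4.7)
The plan is first to show that $\pi_{_F}(x)\perp a$, and then to deduce $x=\pi_{_F}(x)$ by contradicting the uniqueness of the best approximation via the $M$-orthogonality relation \eqref{eq orthogonal are M-orthogonal}.

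For the first step I would consider the one-parameter family $U_t:=\exp(it L(a,a))$. Since $L(a,a)$ is hermitian by axiom $(b)$, each $U_t$ is a surjective linear isometry of $E$; and since $a\in F$ and $F$ is a JB$^*$-subtriple, $L(a,a)(F)=\{a,a,F\}\subseteq F$, so the norm-closedness of $F$ gives $U_t(F)\subseteq F$ and $U_t^{-1}(F)=U_{-t}(F)\subseteq F$ for every $t\in\mathbb{R}$. From $x\perp a$ one immediately obtains $\{a,a,x\}=\{x,a,a\}=L(x,a)(a)=0$, hence $L(a,a)(x)=0$ and $U_t(x)=x$ for every $t$. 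Lemma \ref{l surjective linear isometries} then yields $U_t(\pi_{_F}(x))=\pi_{_F}(U_t(x))=\pi_{_F}(x)$, and differentiating this identity at $t=0$ produces $L(a,a)(\pi_{_F}(x))=0$. Invoking the well-known equivalent reformulation of orthogonality $\{a,a,y\}=0 \Longleftrightarrow a\perp y$ for JB$^*$-triples (cf.\ \cite[Lemma 1]{BurFerGarMarPe08}), I conclude that $\pi_{_F}(x)\perp a$.

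Setting $z:=x-\pi_{_F}(x)$, the previous paragraph together with $x\perp a$ gives $z\perp a$, while the inequality $\|z-y\|=\|x-(\pi_{_F}(x)+y)\|\geq \|x-\pi_{_F}(x)\|=\|z\|$, valid for every $y\in F$ with equality only when $y=0$, shows that $\pi_{_F}(z)=0$ is the unique best approximation of $z$ in $F$. If $z\neq 0$, then the $M$-orthogonality \eqref{eq orthogonal are M-orthogonal} forces $\|z-\lambda a\|=\max\{\|z\|,|\lambda|\|a\|\}=\|z\|$ for every scalar $\lambda$ with $|\lambda|\|a\|\leq \|z\|$; since $a\neq 0$, any such nonzero $\lambda$ produces an element $-\lambda a\in F\setminus\{0\}$ also realizing $\hbox{dist}(z,F)=\|z\|$, contradicting the uniqueness of $\pi_{_F}(z)=0$. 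Therefore $z=0$ and $x=\pi_{_F}(x)\in F$.

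The step I expect to demand the most care is the identification $\ker L(a,a)|_E = \{a\}_E^{\perp}$: this is a folklore characterization of orthogonality in JB$^*$-triples, obtainable by Peirce-decomposing $y$ with respect to $r(a)\in E^{**}$ and using that $a$ is positive with full support in the JBW$^*$-algebra $(E^{**})_2(r(a))$, but the cleanest route is to extract it directly from \cite[Lemma 1]{BurFerGarMarPe08}. Modulo this one non-mechanical ingredient, the remaining argument runs smoothly along the lines above.
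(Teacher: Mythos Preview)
Your proof is correct and follows essentially the same route as the paper's own argument: both use the one-parameter isometry group $\exp(itL(a,a))$ together with Lemma \ref{l surjective linear isometries} to show $L(a,a)(\pi_{_F}(x))=0$, invoke \cite[Lemma 1]{BurFerGarMarPe08} to conclude $\pi_{_F}(x)\perp a$, and then derive a contradiction to uniqueness via \eqref{eq orthogonal are M-orthogonal}. The only cosmetic difference is that you recast the final contradiction in terms of $z=x-\pi_{_F}(x)$ and $\pi_{_F}(z)=0$, whereas the paper works directly with $\pi_{_F}(x)+\mu a$; the content is identical.
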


\begin{proof} Arguing by contradiction, we suppose the existence of an element $x\in \{a\}_{E}^{\perp} \backslash F.$ From the axioms in the definition of JB$^*$-triples, we know that for each $t\in \mathbb{R}$, the mapping $S_{t}=\exp(it L(a,a)) : E \to E$ is a surjective linear isometry (triple isomorphism) with inverse $S_{t}^{-1}=S_{-t}.$ Since $a\in F$ and $F$ is a JB$^*$-subtriple of $E$, we deduce that $L(a,a)^{n} (F) \subseteq F$, and hence $S_{t} (F) \subseteq F,$ for every $t\in \mathbb{R}$.\smallskip

Applying Lemma \ref{l surjective linear isometries} it follows that $S_t (\pi_{_F} (x)) = \pi_{_F} ( S_t (x)),$ for every $t\in \mathbb{R}.$ Having in mind that $a\perp x$ it follows that $L(a,a)^n (x) =0,$ for every natural $n$, which shows that $S_t (x) = x$ for every real $t$. Therefore $$\pi_{_F} (x ) =\pi_{_F} (x) + it L(a,a) (\pi_{_F} (x)) + \sum_{n=2}^{\infty} \frac{i^n t^n}{n!} L(a,a) (\pi_{_F} (x)).$$ Differentiating at $t=0$ we conclude that $L(a,a) (\pi_{_F} (x))=0$, or equivalently $a\perp  \pi_{_F} (x)$ (cf. \cite[Lemma 1]{BurFerGarMarPe08}).\smallskip

We have proved that $a\perp x, \pi_{_F} (x).$ Therefore $\pi_{_F} (x)+\mu a\in F$, for every $\mu\in \mathbb{C}$ and, by orthogonality, $$0<\hbox{dist} (x,F)= \|x-\pi_{_F} (x)\| = \max\{ \|x-\pi_{_F} (x)\| ,\|\mu a\|\} = \|x-\pi_{_F} (x) -\mu a\|,$$ for every $\mu \in \mathbb{C}$ with $\|\mu a\| \leq \|x-\pi_{_F} (x)\|,$ contradicting the uniqueness of the best approximation of $x$ in $F$.
\end{proof}

We recall that a tripotent $u$ in the bidual of a JB$^*$-triple $E$ is said to be \emph{open} when $E^{**}_2 (u)\cap E$ is weak$^*$ dense in $E^{**}_2 (u)$ (see \cite{EdRu96}). A tripotent $e$ in $E^{**}$ is said to be \emph{compact-$G_{\delta}$} (relative to $E$) if there exists a norm one element $a$ in $E$ such that $e$ coincides with $s(a)$, the support tripotent of $a$  (see \cite{EdRu96}). A tripotent $e$ in $E^{**}$ is said to be \emph{compact} (relative to $E$) if there exists a decreasing net $(e_{\lambda})$ of tripotents in $E^{**}$ which are compact-$G_{\delta}$ with infimum $e$, or if $e$ is zero.\smallskip

Closed and bounded tripotents in $E^{**}$ were introduced and studied in \cite{FerPe06} and \cite{FerPe07}.  A tripotent $e$ in $E^{**}$ such that $E^{**}_0 (e)\cap E$ is weak$^*$ dense in $E^{**}_0(e)$ is called \emph{closed} relative to $E$. When there exists a norm one element $a$ in $E$ such that $a= e + P_0 (e) (a)$, the tripotent $e$ is called \emph{bounded} (relative to $E$) (cf. \cite{FerPe06}).  Theorem 2.6 in \cite{FerPe06} (see also \cite[Theorem 3.2]{FerPe10}) asserts that a tripotent $e$ in $E^{**}$ is compact if, and only if, $e$ is closed and bounded.\smallskip

\begin{corollary}\label{c Peirce zero of a compact tripotent} Let $F$ be a \v{C}eby\v{s}\"{e}v JB$^*$-subtriple of a JB$^*$-triple $E$. Let $e$ be a tripotent in $F^{**}$ satisfying that $F^{**}_2 (e) \cap F\neq \{0\}$. Then $\{e\}_{E}^{\perp} = \{x\in E : x\perp e\} = E\cap E^{**}_0 (e) \subseteq F$. Furthermore, if $e$ is closed in $E^{**}$ we also have $E^{**}_0 (e) = F^{**}_0(e)$.
\end{corollary}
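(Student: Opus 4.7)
The plan is to derive the containment $E\cap E^{**}_{0}(e)\subseteq F$ directly from Proposition \ref{p orthogonal} via the Peirce calculus, without any further approximation arguments. First, the equality $\{x\in E : x\perp e\}=E\cap E^{**}_{0}(e)$ is immediate from the general fact $\{e\}^{\perp}_{E^{**}}=E^{**}_{0}(e)$ recorded in Section \ref{sec:preliminaries}, so only the inclusion in $F$ requires work. I would begin by choosing any non-zero $a\in F^{**}_{2}(e)\cap F$ supplied by the hypothesis. Since $F^{**}$ embeds as a weak$^*$-closed JBW$^*$-subtriple of $E^{**}$ and the Peirce projections are determined by the action of $L(e,e)$, the Peirce decomposition transfers cleanly, giving $F^{**}_{2}(e)=F^{**}\cap E^{**}_{2}(e)$, hence $a\in E^{**}_{2}(e)$.

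Next, for any $x\in E$ with $x\perp e$, i.e.\ $x\in E^{**}_{0}(e)$, the Peirce multiplication rule $\{E^{**}_{2}(e),E^{**}_{0}(e),E^{**}\}=0$ recalled in Section \ref{sec:preliminaries} forces $L(a,x)=0$, so $a\perp x$ in $E^{**}$ and therefore in $E$. Since $a$ is a non-zero element of $F$, Proposition \ref{p orthogonal} applies and yields $x\in\{a\}^{\perp}_{E}\subseteq F$. This proves $E\cap E^{**}_{0}(e)\subseteq F$ and so settles the first assertion.

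For the final statement, assume that $e$ is closed in $E^{**}$, meaning that $E\cap E^{**}_{0}(e)$ is weak$^*$-dense in $E^{**}_{0}(e)$. The inclusion just established, together with the observation that any element of $F$ lying in $E^{**}_{0}(e)$ automatically belongs to $F^{**}\cap E^{**}_{0}(e)=F^{**}_{0}(e)$, gives
\[
E\cap E^{**}_{0}(e)\subseteq F^{**}_{0}(e).
\]
The subspace $F^{**}_{0}(e)$ is weak$^*$-closed in $E^{**}$, as it is the intersection of the weak$^*$-closed subtriple $F^{**}$ with the kernel of the weak$^*$-continuous operator $L(e,e)$. Taking weak$^*$-closures in the previous inclusion yields $E^{**}_{0}(e)\subseteq F^{**}_{0}(e)$, and the reverse containment is automatic.

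The only delicate ingredient is the compatibility of the Peirce decompositions of $e$ computed in $F^{**}$ and in $E^{**}$, together with the weak$^*$-closedness of $F^{**}_{0}(e)$ inside $E^{**}$. These are standard structural facts about JBW$^*$-triples and their biduals; once acknowledged, the corollary reduces to the Peirce rule $\{E^{**}_{2}(e),E^{**}_{0}(e),E^{**}\}=0$ applied together with Proposition \ref{p orthogonal}.
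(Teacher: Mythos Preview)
Your proof is correct and follows essentially the same route as the paper's: pick a non-zero $a\in F\cap F^{**}_{2}(e)$, observe via the Peirce rules that $\{e\}^{\perp}_{E}\subseteq\{a\}^{\perp}_{E}$, and invoke Proposition~\ref{p orthogonal}; for the second statement both arguments pass to the weak$^*$-closure using the closedness of $e$ and the weak$^*$-closedness of $F^{**}_{0}(e)$ in $E^{**}$. Your write-up is slightly more explicit about why $a\in E^{**}_{2}(e)$ and why $F^{**}_{0}(e)$ is weak$^*$-closed, but there is no substantive difference.
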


\begin{proof} By hypothesis, the set $F\cap F^{**}_2 (e)$ is non-zero, thus, there exists a non-zero element $a\in F\cap F^{**}_2 (e)$. It is easy to check that $\{e\}_{E}^{\perp} \subseteq \{a\}_{E}^{\perp}$, and the latter is contained in $F$ by Proposition \ref{p orthogonal}.\smallskip

We have already proved that $\{e\}_{E}^{\perp} = E\cap E^{**}_0 (e) \subseteq F$, which implies that $ E\cap E^{**}_0 (e) = F\cap F^{**}_0 (e)$. Since $e$ is closed in $E^{**}$, we can assure that $$E^{**}_0 (e) = \overline{E\cap E^{**}_0 (e)}^{\sigma(E^{**},E^*)}=  \overline{F\cap F^{**}_0 (e)}^{\sigma(E^{**},E^*)} \subseteq F^{**}_0 (e) \subseteq E^{**}_0 (e).$$
\end{proof}

\begin{corollary}\label{c Peirce zero of a range tripotent} Let $F$ be a \v{C}eby\v{s}\"{e}v JB$^*$-subtriple of a JB$^*$-triple $E$. Let $a$ be a non-zero element in $F$ and let $r(a)$ denote the range tripotent of $a$ in $F^{**}$. Suppose that $\{a\}^{\perp}_{F}\neq \{0\}$. Then $E^{**}_0 (r(a)) = F^{**}_0(r(a))$.
\end{corollary}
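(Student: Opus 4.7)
My plan is to reduce the statement to the ``furthermore'' assertion of Corollary~\ref{c Peirce zero of a compact tripotent} applied to the tripotent $e := r(a)\in F^{**}$. The first required hypothesis, $F^{**}_{2}(r(a))\cap F\neq\{0\}$, is automatic: since $r(a)$ is by construction the range tripotent of $a$, the element $a$ is positive (in particular nonzero) in the JBW$^{*}$-algebra $F^{**}_{2}(r(a))$ with unit $r(a)$, so $a\in F\cap F^{**}_{2}(r(a))\setminus\{0\}$. The real work lies in verifying the second hypothesis, namely that $r(a)$ is closed in $E^{**}$.

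For the closedness step I would first observe that the hypothesis $\{a\}^{\perp}_{F}\neq\{0\}$ prevents $a$ from being Brown--Pedersen quasi-invertible in $F$: otherwise $r(a)$ would be a complete tripotent in $F$, forcing $F^{**}_{0}(r(a))\cap F=\{a\}^{\perp}_{F}=\{0\}$. Consequently Proposition~\ref{prop inner ideals} and Corollary~\ref{c prop inner ideals} apply and deliver the inner-ideal equality $E(a)=F(a)$. Passing to biduals and identifying the bidual of the norm-closed inner ideal generated by $a$ with the Peirce-2 subspace of its range tripotent yields
\[
E^{**}_{2}(r(a)) \;=\; F^{**}_{2}(r(a)).
\]
Combining this Peirce-2 coincidence with Proposition~\ref{p orthogonal} (which gives $\{a\}^{\perp}_{E}=E\cap E^{**}_{0}(r(a))\subseteq F$), the plan is to push Goldstine-style density of $E$ in $E^{**}$ through the Peirce-0 projection $P_{0}(r(a))$, exploit the weak-$*$ closedness of $F^{**}$ in $E^{**}$, and thereby show that $E\cap E^{**}_{0}(r(a))$ is weak-$*$ dense in $E^{**}_{0}(r(a))$. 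Once this closedness of $r(a)$ in $E^{**}$ is in hand, the furthermore part of Corollary~\ref{c Peirce zero of a compact tripotent} immediately produces the desired equality $E^{**}_{0}(r(a)) = F^{**}_{0}(r(a))$.

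The main obstacle I anticipate is precisely this closedness of $r(a)$ in $E^{**}$. For a general element $a$ of a JB$^{*}$-triple the range tripotent is open in the bidual but need not be closed, so closedness cannot be invoked for free; it must be forced from the Peirce-2 identification $E^{**}_{2}(r(a))=F^{**}_{2}(r(a))$ supplied by the inner-ideal machinery together with the \v{C}eby\v{s}\"{e}v-driven orthogonal-complement inclusion $\{a\}^{\perp}_{E}\subseteq F$. Coordinating these two ingredients into a weak-$*$ approximation of an arbitrary $z\in E^{**}_{0}(r(a))$ by elements of $E\cap E^{**}_{0}(r(a))\subseteq F$ will be the technical heart of the argument.
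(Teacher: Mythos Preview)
Your reduction to Corollary~\ref{c Peirce zero of a compact tripotent} is the right idea, and your observation that $a\in F^{**}_{2}(r(a))\cap F\setminus\{0\}$ is fine. The gap is exactly where you flag it: you do not have a working mechanism to prove that $r(a)$ is closed in $E^{**}$. The ``Goldstine through $P_{0}(r(a))$'' step does not go through as stated, because the Peirce projection $P_{0}(r(a))$ is a weak$^{*}$-continuous map on $E^{**}$ but has no reason to send $E$ into $E$ (or into $F$, or into $F^{**}$). Knowing $E^{**}_{2}(r(a))=F^{**}_{2}(r(a))$ controls only the Peirce-$2$ piece; to conclude $P_{0}(r(a))(x)=x-P_{2}(r(a))(x)-P_{1}(r(a))(x)\in F^{**}$ for $x\in E$ you would also need control over $P_{1}(r(a))(x)$, which none of your ingredients supplies. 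So from $x_{\alpha}\to z\in E^{**}_{0}(r(a))$ with $x_{\alpha}\in E$ you get $P_{0}(r(a))(x_{\alpha})\to z$, but the approximants need not lie in $E\cap E^{**}_{0}(r(a))$ or in $F^{**}$, and the argument stalls.

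The paper sidesteps this entirely: it never tries to show $r(a)$ is closed. Instead it passes to a \emph{smaller} tripotent $e\leq r(a)$ that is known to be compact. Concretely, in the local model $F_{a}\cong C_{0}(L_{a})$ one takes $e=\chi_{[\delta,1]\cap L_{a}}$ for small $\delta>0$; this $e$ is a support tripotent of an element of $F_{a}\subseteq F(a)$, hence compact-$G_{\delta}$ in $F(a)^{**}$. Your inner-ideal equality $E(a)=F(a)$ is used here, but for a different purpose: it makes $F(a)$ a norm-closed inner ideal of $E$, so by \cite[Proposition~3.3]{FerPe10} (or \cite[Corollary~2.9]{FerPe06}) compact tripotents in $F(a)^{**}$ remain compact --- hence closed --- in $E^{**}$. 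Now Corollary~\ref{c Peirce zero of a compact tripotent} applies to $e$ (not to $r(a)$) and yields $E^{**}_{0}(e)=F^{**}_{0}(e)$. The final step is a simple monotonicity: $r(a)\geq e$ forces $E^{**}_{0}(r(a))\subseteq E^{**}_{0}(e)=F^{**}_{0}(e)\subseteq F^{**}$, and intersecting with the Peirce-$0$ eigenspace gives $E^{**}_{0}(r(a))=F^{**}_{0}(r(a))$. The trick, in short, is to replace the (generally only open) range tripotent by a compact tripotent dominated by it, rather than trying to manufacture closedness of $r(a)$ itself.
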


\begin{proof} We can assume that $\|a\|=1$. Let $F_a$ denote the JB$^*$-subtriple of $F$ (or of $E$) generated by $a$. We have already commented that there exists $L_a\subseteq [0,1]$, with $L_a$ compact and $1\in L_a$, and a triple isomorphism $\varPsi$ from $F_a$ onto $C_{0}(L_a)$, satisfying $\varPsi(a) (t) = t$ $(t\in L_a)$  (cf. \cite[Lemma~1.14]{Ka83}).\smallskip

Proposition \ref{prop inner ideals} and Corollary \ref{c prop inner ideals} imply that $E(a) = F(a).$ Therefore $E(a)=F(a)$ is an open JB$^*$-subtriple of $E^{**}$ relative to $E$ in the sense employed in \cite{FerPe06,FerPe09,FerPe10}. Proposition 3.3 in \cite{FerPe10} (or \cite[Corollary 2.9]{FerPe06}) implies that every compact tripotent in $F(a)^{**}$ is compact in $E^{**}.$ Let us take a compact tripotent $e\in F(a)^{**}$ satisfying that $e\leq r(a)$ and $F^{**}_2 (e) \cap F\neq \{0\}$ (we can consider, for example $e= \chi_{_{[\delta,1]\cap L_a}}$ the characteristic function of the set $[\delta,1]\cap L_a$ in $F_a$ and $y (t):=\left\{%
\begin{array}{ll}
    0, & \hbox{if $t\in [0,\delta ]$;} \\
    \hbox{affine}, & \hbox{if $t\in [\delta,2 \delta ]$;} \\
    1, & \hbox{if $t\in [2 \delta,1 ]$.} \\
\end{array}%
\right.$ in $F_a$, $y\in F^{**}_2 (e) \cap F$ with $0<\delta<2\delta<1).$ Since $e$ is compact, and hence closed in $E^{**}$ (cf. \cite[Theorem 2.6]{FerPe06}), Corollary \ref{c Peirce zero of a compact tripotent} proves that $E^{**}_0 (e) = F^{**}_0(e).$ Finally, it is easy to see that, since $r(a)\geq e$, $E^{**}_0 (r(a))\subseteq E^{**}_0 (e) = F^{**}_0(e)\subseteq F^{**},$ and hence $E^{**}_0 (r(a)) = F^{**}_0(r(a))$.
\end{proof}

We turn now our focus to the Peirce one subspace associated with a range tripotent.

\begin{lemma}\label{l Peirce 1} Let $F$ be a \v{C}eby\v{s}\"{e}v JB$^*$-subtriple of a JB$^*$-triple $E$. Suppose $a_1,a_2, b_1,b_2$ are elements in $F$ satisfying that  $a_1+b_1$ and $a_2+b_2$ are not Brown-Pedersen quasi-invertible in $F$, and $a_i \perp b_j$ for every $i,j\in\{1,2\}$. Then $ L(b_1,b_2) L(a_1,a_2) (E) \subseteq F.$
\end{lemma}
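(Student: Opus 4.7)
The plan is to first reduce to the situation where each of $a_1, a_2, b_1, b_2$ is individually non-Brown--Pedersen quasi-invertible in $F$. (If any of them vanishes, then either $L(a_1, a_2)$ or $L(b_1, b_2)$ vanishes and the statement is trivial, so assume all four are non-zero.) If, for instance, $a_i$ were BP quasi-invertible in $F$, then its range tripotent $r(a_i) \in F^{**}$ would be complete, giving $F^{**}_0(r(a_i)) = \{0\}$; but $b_i \perp a_i$ places $b_i \in F^{**}_0(r(a_i))$, contradicting $b_i \neq 0$. A symmetric argument handles $b_i$, so both $a_i$ and $b_i$ are non-BP quasi-invertible in $F$ for each $i$. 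Proposition \ref{prop inner ideals} now gives $\overline{Q(a_i)(E)}$, $\overline{Q(b_i)(E)}$ and $\overline{Q(c_i)(E)}$ all contained in $F$. Expanding
\[
Q(c_i)(z) = Q(a_i)(z) + Q(b_i)(z) + 2\{a_i, z, b_i\}
\]
for $z \in E$, we conclude $\{a_i, z, b_i\} \in F$; in particular, with the notation $Q(x, y)(w) := \{x, w, y\}$, the operator $Q(a_2, b_2)$ maps $E$ into $F$.

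The heart of the argument is the operator identity
\[
L(b_1, b_2) \, L(a_1, a_2) = Q(a_1, b_1) \, Q(a_2, b_2)
\]
on the whole of $E$, valid under the standing orthogonalities $a_i \perp b_j$ for all $i, j \in \{1, 2\}$. I would derive it by iterated use of the Jordan identity
\[
L(x, y)\{r, s, t\} = \{L(x, y) r, s, t\} - \{r, L(y, x) s, t\} + \{r, s, L(x, y) t\},
\]
exploiting that the orthogonality relations imply $L(a_i, b_j) = L(b_j, a_i) = 0$, and hence the vanishing of all mixed triple products $\{a_i, b_j, \cdot\}$ and $\{\cdot, b_j, a_i\}$. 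This is the main technical obstacle: a single naive application of the Jordan identity only delivers the commutativity $L(b_1, b_2) L(a_1, a_2) = L(a_1, a_2) L(b_1, b_2)$, whereas obtaining the finer equality with $Q(a_1, b_1) Q(a_2, b_2)$ requires combining several expansions, together with intertwinings such as $L(a_1, a_2) Q(b_1, b_2) = -Q(b_1, b_2) L(a_2, a_1)$ that drop out of the Jordan identity once orthogonality kills two of its three terms. As a sanity check, in $B(H)$ both sides of the identity collapse to $\tfrac14\bigl(a_1 a_2^* z b_2^* b_1 + b_1 b_2^* z a_2^* a_1\bigr)$ once the vanishing products $a_i b_j^* = a_i^* b_j = 0$ are invoked.

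With the identity in hand, the conclusion is immediate: for any $z \in E$, the element $Q(a_2, b_2)(z) = \{a_2, z, b_2\}$ lies in $F$ by the first paragraph, and since $a_1, b_1 \in F$ and $F$ is a JB$^*$-subtriple (hence closed under the triple product), the element $Q(a_1, b_1)\bigl(Q(a_2, b_2)(z)\bigr) = \{a_1, \{a_2, z, b_2\}, b_1\}$ also lies in $F$. By the identity above this coincides with $L(b_1, b_2) L(a_1, a_2)(z)$, so $L(b_1, b_2) L(a_1, a_2)(E) \subseteq F$, as claimed.
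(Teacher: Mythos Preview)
Your proof is correct and follows essentially the same route as the paper: first obtain $Q(a_j,b_j)(E)\subseteq F$ from Proposition~\ref{prop inner ideals} via the expansion $Q(a_j+b_j)=Q(a_j)+Q(b_j)+2Q(a_j,b_j)$, then invoke the operator identity $Q(a_1,b_1)Q(a_2,b_2)=L(b_1,b_2)L(a_1,a_2)$ (which the paper also states without a detailed derivation, simply attributing it to the Jordan identity and orthogonality). Your preliminary reduction showing that each $a_i,b_i$ is individually non-BP quasi-invertible is a point the paper leaves implicit, and your final step---using only $Q(a_2,b_2)(E)\subseteq F$ together with the subtriple property of $F$ rather than invoking the $j=1$ case of \eqref{eq 20052015}---is a harmless variation. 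One cosmetic issue: you use $c_i$ without defining it; make explicit that $c_i:=a_i+b_i$.
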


\begin{proof} Since $a_1+b_1$ and $a_2+b_2$ are not Brown-Pedersen quasi-invertible in $F$, Proposition \ref{prop inner ideals} assures that \begin{equation}\label{eq 20052015} Q(a_j,b_j) (E) \subseteq Q(a_j+b_j) (E) + Q(a_j) (E) + Q(b_j)(E) \subseteq F,
 \end{equation}for every $j=1,2.$ The identity $$Q(a_1,b_1) Q(a_2, b_2) = L(b_1,b_2) L(a_1,a_2)$$ can be easily deduced from the Jordan identity and the orthogonality of $a_i$ and $b_j$. The last identity together with \eqref{eq 20052015} prove the desired statement.
 \end{proof}

We can establish now our first main result on \v{C}eby\v{s}\"{e}v JB$^*$-subtriples of a general JB$^*$-triple.

\begin{thm}\label{t F rank 3 Cebysev subtriple} Let $F$ be a \v{C}eby\v{s}\"{e}v JB$^*$-subtriple of a JB$^*$-triple $E$. Suppose $F$ has rank greater or equal than three. Then $E=F$.
\end{thm}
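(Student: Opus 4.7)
The plan is to reduce $E=F$ to the bidual equality $E^{**}=F^{**}$. Once the latter is established, the fact that $F^{**}$ identifies canonically with the $\sigma(E^{**},E^{*})$-closure of $F$ in $E^{**}$ forces $F$ to be weakly dense in $E$; since $F$ is a norm-closed convex subspace of $E$, Mazur's theorem will then give $F=E$.

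Using $\mathrm{rank}(F)\geq 3$, I would fix three pairwise orthogonal non-zero elements $a_1,a_2,a_3\in F$ and let $r_i:=r(a_i)\in F^{**}$ denote their range tripotents; these are three mutually orthogonal tripotents of $F^{**}\subseteq E^{**}$. For each $i$, the set $\{a_i\}^{\perp}_{F}$ contains the other two $a_j$'s and is therefore non-zero, so Corollary~\ref{c Peirce zero of a range tripotent} applies and yields
\[
E^{**}_{0}(r_i)=F^{**}_{0}(r_i)\subseteq F^{**},\qquad i=1,2,3.
\]

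The heart of the argument is then a joint Peirce decomposition of $E^{**}$ with respect to $r_1,r_2,r_3$. Orthogonality gives $L(r,r)=\sum_{i}L(r_i,r_i)$ for $r:=r_1+r_2+r_3$, and the Loos Peirce calculus guarantees that the only non-trivial joint Peirce subspaces $E^{**}_{(i_1,i_2,i_3)}$ are those indexed by triples with $i_1+i_2+i_3\leq 2$, subject to the rule that $i_j=2$ forces $i_k=0$ for $k\neq j$ (because $E^{**}_{2}(r_j)\subseteq E^{**}_{0}(r_k)$). This enumerates exactly the ten triples
\[
(0,0,0),(1,0,0),(0,1,0),(0,0,1),(1,1,0),(1,0,1),(0,1,1),(2,0,0),(0,2,0),(0,0,2),
\]
and the crucial observation is that \emph{each of these has at least one zero coordinate}. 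Consequently every joint Peirce piece sits inside $E^{**}_{0}(r_k)$ for the appropriate $k$, hence inside $F^{**}$ by the displayed inclusion. Summing the pieces covers all of $E^{**}$, giving $E^{**}\subseteq F^{**}$ and hence $E^{**}=F^{**}$, which closes the argument.

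The step I expect to require the most care is the clean justification of the joint Peirce calculus for three mutually orthogonal tripotents in $E^{**}$---in particular the additivity $L(r,r)=\sum L(r_i,r_i)$, the inclusion $E^{**}_{2}(r_i)\subseteq E^{**}_{0}(r_j)$ for $i\neq j$, and the bound $i_1+i_2+i_3\leq 2$ forced by the spectrum $\{0,1/2,1\}$ of $L(r,r)$. The hypothesis $\mathrm{rank}(F)\geq 3$ enters precisely in order to produce the third orthogonal tripotent that makes every Peirce triple contain a zero coordinate; for rank two the piece $(1,1)$ relative to $(r_1,r_2)$ cannot be absorbed into any Peirce-$0$, which is exactly what prevents case~(c) of the classification from collapsing to equality.
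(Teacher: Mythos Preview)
Your argument is correct and takes a genuinely different---and cleaner---route than the paper. The paper works with only two of the three range tripotents, say $r(a)$ and $r(b)$: it first invokes Lemma~\ref{l Peirce 1} (which in turn rests on Proposition~\ref{prop inner ideals}) to show $L(a_1,a_2)L(b_1,b_2)(E)\subseteq F$ for $a_i\in F_a$, $b_i\in F_b$, then passes to strong$^*$-limits along bounded sequences to obtain $L(r(a),r(a))L(r(b),r(b))(E^{**})\subseteq F^{**}$, from which the Peirce-$2$ and Peirce-$1$ subspaces of $r(a)+r(b)$ land in $F^{**}$; only then is Corollary~\ref{c Peirce zero of a range tripotent} applied once, with the third element $c$, to absorb $E^{**}_0(r(a)+r(b))$. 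You instead apply Corollary~\ref{c Peirce zero of a range tripotent} symmetrically to each $r_i$ and let the pigeonhole observation---that every joint Peirce index $(i_1,i_2,i_3)$ with $i_1+i_2+i_3\leq 2$ contains a zero coordinate---do all the work. This bypasses Lemma~\ref{l Peirce 1} and the strong$^*$-continuity machinery entirely; the only price is the justification of the joint Peirce calculus for three mutually orthogonal tripotents, which, as you note, is standard (the additivity $L(r,r)=\sum_iL(r_i,r_i)$ follows from $L(r_i,r_j)=0$ for $i\neq j$, and the eigenvalue bound is immediate because $r$ is itself a tripotent). Your closing remark explaining why rank two fails---the $(1,1)$ piece has no zero coordinate---is exactly the right diagnosis.
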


\begin{proof} Since $F$ has rank greater or equal than three, we can find mutually orthogonal norm-one elements $a,b,c$ in $F.$ Let $F_a$, $F_b$, and $F_c$ denote the JB$^*$-subtriples of $F$ generated by $a$, $b$, and $c,$ respectively. Since $F$ is a JB$^*$-subtriple of $E$, they also coincide with the JB$^*$-subtriples of $E$ generated by $a$, $b$, and $c$ respectively. We observe that $c$ is orthogonal to every element in $F_a\oplus F_b$. So, given $a_1,a_2\in F_a$ and $b_1,b_2\in F_b$, we apply Lemma \ref{l Peirce 1} to deduce that \begin{equation}\label{eq 1 21052015} L(a_1,a_2) L(b_1,b_2) (E) \subseteq F,
\end{equation} for every $a_1,a_2\in F_a$ and $b_1,b_2\in F_b$. \smallskip

Let us consider two bounded sequences $(a_n)\subset F_a$, $(b_n)\subset F_b$ converging to $r(a)$ and $r(b)$ (the range tripotents of $a$ and $b$ in $F^{**}$) in the strong$^*$-topology of $F^{**}$, or equivalently, in the strong$^*$-topology of $E^{**}$ (cf. \cite[Corollary]{Bun01}). Fix $x\in E$. It follows from \eqref{eq 1 21052015} that $L(a_n,a_n) L(b_n,b_n) (x) \in F$, for every natural $n$. The joint strong$^*$-continuity of the triple product on bounded sets of $E^{**}$ implies that $L(r(a),r(a)) L(r(b),r(b)) (x) \in F^{**}\equiv \overline{F}^{\sigma(E^{**},E^{*})},$ for every $x\in E.$ The weak$^*$-density of $E$ in $E^{**}$ (cf. Goldstine's theorem) and the separate weak$^*$-continuity of the triple product of $E^{**}$ assures that \begin{equation}\label{eq 2 21052015} L(r(a),r(a)) L(r(b),r(b)) (E^{**}) \subseteq F^{**}.
\end{equation}

It is well known that $L(r(a),r(a)) = P_2 (r(a)) + \frac12 P_1 (r(a))$, and a similar identity holds for $r(b)$. Therefore \eqref{eq 2 21052015} implies that \begin{equation}\label{eq 3 21052015} \left(P_2 (r(a)) + \frac12 P_1 (r(a))\right) \left(P_2 (r(b)) + \frac12 P_1 (r(b))\right) (E^{**} ) \subseteq F^{**}.
 \end{equation}It is well known that $r(a)\perp r(b)$ implies $$E^{**}_2 (r(a)+r(b)) = E^{**}_2 (r(a))\oplus E^{**}_2 (r(b)) \oplus \Big(E^{**}_1 (r(a))\cap E^{**}_1 (r(b))\Big)$$ and $$E^{**}_1 (r(a)+r(b)) = \Big(E^{**}_1 (r(a)) \cap E^{**}_0 (r(b))\Big) \oplus \Big(E^{**}_0 (r(a))\cap E^{**}_1 (r(b))\Big).$$ We thus deduce from \eqref{eq 3 21052015} that $$ E^{**}_2 (r(a)+r(b)) \subseteq F^{**},\hbox{ and } E^{**}_1 (r(a)+r(b))\subseteq F^{**},$$ and hence \begin{equation}\label{eq Peirce 12} E^{**}_2 (r(a)+r(b)) = F^{**}_2 (r(a)+r(b)),
 \end{equation} and $$ E^{**}_1 (r(a)+r(b))= F^{**}_1 (r(a)+r(b)).$$

Since $c\in \{a+b\}_{F}^{\perp}$, we are in position to apply Corollary \ref{c Peirce zero of a range tripotent} to show that \begin{equation}\label{eq Peirce 0} E^{**}_0 (r(a)+r(b))= E^{**}_0 (r(a+b)) = F^{**}_0(r(a+b)) =  F^{**}_0(r(a)+r(b)).
\end{equation} It follows from \eqref{eq Peirce 12} and \eqref{eq Peirce 0} that $$E^{**} = E^{**}_0 (r(a+b))\oplus E^{**}_1 (r(a+b)) \oplus E^{**}_2 (r(a+b))$$ $$ = F^{**}_0 (r(a+b))\oplus F^{**}_1 (r(a+b)) \oplus F^{**}_2 (r(a+b)) =F^{**}.$$  Finally, it is easy to check that, under these conditions, $E=F,$ as desired.
\end{proof}

\begin{corollary}\label{c t F rank 3 Cebysev subtriple} Let $B$ be a \v{C}eby\v{s}\"{e}v C$^*$-subalgebra (respectively, a \v{C}eby\v{s}\"{e}v JB$^*$-subtriple) of a C$^*$-algebra $A$. Suppose $B$ has rank greater or equal than three. Then $A=B$.$\hfill\Box$
\end{corollary}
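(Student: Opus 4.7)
The plan is essentially to reduce the corollary directly to Theorem \ref{t F rank 3 Cebysev subtriple}, exploiting the fact that the C$^*$-algebra setting is a special case of the JB$^*$-triple setting. The crucial observation recalled in Section \ref{sec:preliminaries} is that every C$^*$-algebra $A$ becomes a JB$^*$-triple under the triple product $\{a,b,c\} = \tfrac{1}{2}(ab^*c + cb^*a)$ defined in \eqref{eq ternary product on C*-algebras}, and any norm-closed $^*$-subalgebra $B$ of $A$ is closed under this product and is therefore automatically a JB$^*$-subtriple of $A$. Consequently, the C$^*$-subalgebra case is subsumed by the JB$^*$-subtriple case, and both statements collapse to a single application of the main theorem.

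More concretely, I would first handle the JB$^*$-subtriple statement, which is the general one: given a Čebyšëv JB$^*$-subtriple $B$ of the JB$^*$-triple $A$ with $\mathrm{rank}(B) \geq 3$, the conclusion $A=B$ is immediate from Theorem \ref{t F rank 3 Cebysev subtriple} by setting $E=A$ and $F=B$. The hypotheses match verbatim: $B$ is a Čebyšëv subspace by assumption, $B$ is a JB$^*$-subtriple of $A$ by assumption, and the rank hypothesis is identical.

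For the C$^*$-subalgebra statement, I would note that the triple product on a C$^*$-subalgebra $B \subseteq A$ inherited from $A$ coincides with the one it carries as a C$^*$-algebra in its own right, so the notion of rank in $B$ as a JB$^*$-triple is unambiguous. Once this identification is made, $B$ becomes a Čebyšëv JB$^*$-subtriple of the JB$^*$-triple $A$ of rank at least three, and Theorem \ref{t F rank 3 Cebysev subtriple} again yields $A=B$.

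Since the corollary is a direct specialization, there is no genuine obstacle; the only point requiring any care is to confirm that C$^*$-algebraic subalgebras are JB$^*$-subtriples in the inherited triple product, which is immediate from the definition of the product and the stability of $B$ under adjoint and associative multiplication. The proof is therefore one or two lines long, consisting of invoking Theorem \ref{t F rank 3 Cebysev subtriple} after this trivial verification.
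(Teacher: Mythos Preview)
Your proposal is correct and matches the paper's approach exactly: the corollary is stated with a terminal $\Box$ and no written proof, because it is an immediate specialization of Theorem~\ref{t F rank 3 Cebysev subtriple} via the observation (recalled in Section~\ref{sec:preliminaries}) that every C$^*$-algebra is a JB$^*$-triple under the product~\eqref{eq ternary product on C*-algebras} and every C$^*$-subalgebra is a JB$^*$-subtriple. There is nothing to add.
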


It is well known that every infinite dimensional C$^*$-algebra contains an infinite sequence of mutually orthogonal non-zero elements (cf. \cite[Exercise 4.6.13]{KadRing1997}), that is every infinite dimensional C$^*$-algebra has infinite rank. Furthermore, Exercise 4.6.12 in \cite{KadRing1997} proves that every C$^*$-algebra with finite rank must be finite dimensional and hence unital (cf. \cite[Theorem I.11.2]{Takesaki}).\smallskip

Let us observe that a non-zero C$^*$-algebra without unit must have infinite rank. Thus, the following result of Pedersen follows from Corollary \ref{c t F rank 3 Cebysev subtriple}.

\begin{corollary}\label{c Pedersen}\cite[Theorem 4]{Ped79} Let $B$ be a non-unital \v{C}eby\v{s}\"{e}v C$^*$-subalgebra of a C$^*$-algebra $A$. Then $A=B$.$\hfill\Box$
\end{corollary}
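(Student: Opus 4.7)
The plan is a short reduction to Corollary \ref{c t F rank 3 Cebysev subtriple}. First I would invoke the two facts quoted just before the statement: by \cite[Exercise 4.6.12]{KadRing1997} together with \cite[Theorem I.11.2]{Takesaki}, every C$^*$-algebra of finite rank is finite dimensional and hence unital. Taking contrapositives, any non-zero non-unital C$^*$-algebra must have infinite rank. Applying this to the hypothesis on $B$, we obtain rank$(B)=\infty$, in particular rank$(B)\geq 3$.

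Next I would note that the C$^*$-subalgebra inclusion $B\hookrightarrow A$ is automatically an inclusion of JB$^*$-triples when both carry the canonical triple product from \eqref{eq ternary product on C*-algebras}, so the hypothesis that $B$ is \v{C}eby\v{s}\"{e}v in $A$ (as a closed subspace) makes $B$ in particular a \v{C}eby\v{s}\"{e}v JB$^*$-subtriple of $A$. Now the hypotheses of Corollary \ref{c t F rank 3 Cebysev subtriple} are verified, and the conclusion $A=B$ follows at once.

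No serious obstacle is expected here; the entire substance of the argument is already packaged in Theorem \ref{t F rank 3 Cebysev subtriple}, and the present corollary simply converts the rank hypothesis into the classical algebraic hypothesis ``non-unital''. The only point requiring minor attention is handling the trivial case $B=\{0\}$, which is excluded by the implicit non-triviality assumption behind calling $B$ non-unital; if one wished to be fully pedantic, one could dispose of it by noting that $B=\{0\}$ would force $A=\{0\}$ as well (since $\{0\}$ is \v{C}eby\v{s}\"{e}v in $A$ only when $0$ is the unique best approximation of every element of $A$), which is again $A=B$.
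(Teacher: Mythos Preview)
Your argument is correct and is exactly the reduction the paper has in mind: the text immediately preceding the corollary observes that a non-zero non-unital C$^*$-algebra has infinite rank, and then invokes Corollary~\ref{c t F rank 3 Cebysev subtriple}. One small slip in your final parenthetical: the subspace $\{0\}$ is trivially \v{C}eby\v{s}\"{e}v in \emph{every} Banach space (the only candidate, namely $0$, is always the unique best approximation), so this does not force $A=\{0\}$; the degenerate case $B=\{0\}$ is simply excluded by the implicit non-triviality of ``non-unital'', as you correctly noted first.
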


It remains to study \v{C}eby\v{s}\"{e}v JB$^*$-subtriples of rank smaller or equal than two. In this case, the conclusion will follow from the main result in \cite{JamPeSiTah2014Ceby} and the studies about finite rank JB$^*$-triples developed in \cite{BuChu} and \cite{BeLoPeRo}.\smallskip

\begin{thm}\label{t Cebysev JB*-subtriples JPST} Let $F$ be a non-zero \v{C}eby\v{s}\"{e}v JB$^*$-subtriple of a JB$^*$-triple $E$. Then exactly one of the following statements holds:\begin{enumerate}[$(a)$]\item $F$ is a rank one JBW$^*$-triple with dim$(F)\geq 2$ (i.e. a complex Hilbert space regarded as a type 1 Cartan factor). Moreover, $F$ may be a closed subspace of arbitrary dimension and $E$ may have arbitrary rank;
\item $F= \mathbb{C} e$, where $e$ is a complete tripotent in $E$;
\item $E$ and $F$ are rank two JBW$^*$-triples, but $F$ may have arbitrary dimension;
\item $F$ has rank greater or equal than three and $E=F$.
\end{enumerate}
\end{thm}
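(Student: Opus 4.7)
My plan is to split on $\mathrm{rank}(F)$. The case $\mathrm{rank}(F)\ge 3$ is already covered by Theorem \ref{t F rank 3 Cebysev subtriple}, giving $(d)$. In all remaining cases $F$ has finite rank, so by the reflexivity of finite rank JB$^*$-triples (\cite{BuChu}, \cite{BeLoPeRo}) we have $F = F^{**}$; that is, $F$ is automatically a JBW$^*$-triple. This is the key ingredient that allows me to piggyback on the JBW$^*$-triple version of the theorem.

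For $\mathrm{rank}(F)=1$ I would split on dimension. If $\dim F = 1$, the axioms $L(a,a)\ge 0$ and $\|\{x,x,x\}\|=\|x\|^3$ let me rescale any generator to a tripotent $e$ with $F=\mathbb{C} e$, and because $F_0(e)=\{0\}$ Proposition \ref{prop JamPeSiddTahCeby}$(a)$ upgrades $e$ to a complete tripotent of $E$, giving $(b)$. If $\dim F\ge 2$, the classical identification of rank one JBW$^*$-triples with complex Hilbert spaces (type 1 Cartan factors) yields $(a)$ without any requirement on $E$.

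The substantial case is $\mathrm{rank}(F)=2$, where I must show that $E$ itself is a rank two JBW$^*$-triple. My plan is to pick orthogonal minimal tripotents $e_1,e_2\in F$ (available in the rank two JBW$^*$-triple $F$) and set $u=e_1+e_2$. Proposition \ref{prop JamPeSiddTahCeby}$(a)$ applied to $u$ gives $E_0(u)=\{0\}$, so $u$ is complete in $E$; Proposition \ref{prop JamPeSiddTahCeby}$(b)$ applied to each $e_i$ (whose orthogonal complement in $F$ contains $e_j$) yields $E_2(e_i)=F_2(e_i)=\mathbb{C} e_i$ (the last equality being minimality of $e_i$ in $F$), so each $e_i$ is minimal in $E$. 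Passing to the bidual $E^{**}$ and using weak$^*$-continuity of the Peirce projections associated with $u$ and with each $e_i$, both completeness of $u$ and minimality of each $e_i$ lift to $E^{**}$, so $u = e_1+e_2$ remains a complete tripotent of $E^{**}$ decomposing into two orthogonal minimal tripotents. Standard Peirce theory for JBW$^*$-triples then forces $\mathrm{rank}(E^{**})=2$, hence $\mathrm{rank}(E)\le 2$, and a second application of \cite{BuChu}, \cite{BeLoPeRo} gives $E=E^{**}$, placing us in case $(c)$; consistency with Theorem \ref{t Cebysev  JBW*-subtriples JPST} is then automatic.

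The main obstacle is the last step: transporting rank information from the JBW$^*$-triple $F$ to the a priori non-dual JB$^*$-triple $E$ via $E^{**}$. This requires two background facts to be invoked carefully, namely weak$^*$-continuity of the Peirce projections attached to a fixed tripotent of $E$ (so that the minimal/complete status of the $e_i$ and of $u$ truly lifts to $E^{**}$), and the fact that in a JBW$^*$-triple a complete tripotent admitting a decomposition into $n$ orthogonal minimal tripotents pins down the global rank to equal $n$. Both are standard consequences of Peirce theory for JBW$^*$-triples and can be cited from the references already in play rather than re-derived.
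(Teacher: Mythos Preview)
Your proof is correct and follows essentially the same route as the paper's: split on $\mathrm{rank}(F)$, invoke Theorem~\ref{t F rank 3 Cebysev subtriple} for rank $\geq 3$, use reflexivity of finite-rank JB$^*$-triples (\cite{BuChu}, \cite{BeLoPeRo}) to make $F$ a JBW$^*$-triple when $\mathrm{rank}(F)\leq 2$, and apply Proposition~\ref{prop JamPeSiddTahCeby} to transfer minimality/completeness of tripotents from $F$ to $E$. Your explicit passage through $E^{**}$ in the rank-two case is a bit more careful than the paper (which, after obtaining $e_1,e_2$ minimal and $e_1+e_2$ complete in $E$, simply asserts that $E$ is a rank-two JBW$^*$-triple), but this is added detail rather than a different argument.
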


\begin{proof}  If $F$ has rank $\geq 3$, Theorem \ref{t F rank 3 Cebysev subtriple} implies that $E=F$. We may therefore assume that $F$ has rank $\leq 2$. It follows from \cite[Proposition 4.5 and comments at the beggining of \S 4]{BuChu} (see also \cite[\S 3]{BeLoPeRo}) that $F$ is reflexive. So, $F$ is a reflexive JBW$^*$-triple of rank $\leq 2$.\smallskip

We shall adapt next the arguments in the proof of \cite[Theorem 13]{JamPeSiTah2014Ceby}, we provide a simplified argument. Every JBW$^*$-triple admits an abundant collection of complete tripotents or extreme points of its closed unit ball (cf. \cite[Lemma 4.1]{BraKaUp78} and \cite[Proposition 3.5]{KaUp77} or \cite[Theorem 3.2.3]{Chu}). Thus, we can find a complete tripotent $e$ in $F$. There are only two possibilities:  either $e$ is minimal in $F$ or $e$ has rank two in $F$.\smallskip

When $e$ is rank two in $F$. We can write $e= e_1+e_2$ with $e_1,e_2$ mutually orthogonal minimal tripotents in $F$. Proposition \ref{prop JamPeSiddTahCeby} proves that $E_2 (e_j) = F_2 (e_j) = \mathbb{C} e_j$, $E_0 (e_j) = F_0 (e_j)$, and $E_0 (e_1+e_2) = F_0 (e_1+e_2)=\{0\},$ which proves that $e_1$ and $e_2$ are minimal tripotents in $E$, $e$ is complete in $E$, and $E$ is a rank-2 JBW$^*$-triple.\smallskip

We finally assume that $e$ is minimal and complete in $F$. If dim$(F)=1$, then $F= \mathbb{C} e$, and we are in case $(b)$, otherwise we are in case $(a)$.
\end{proof}

It should be remarked here that Remark 7 in \cite{JamPeSiTah2014Ceby} provides an example of an infinite dimensional rank-one \v{C}eby\v{s}\"{e}v JB$^*$-subtriple of a JB$^*$-triple, while \cite[Remark 13]{JamPeSiTah2014Ceby} gives an example of a rank-one \v{C}eby\v{s}\"{e}v JB$^*$-subtriple of a rank-$n$ JBW$^*$-triple, where $n$ is an arbitrary natural number.\smallskip

In the setting of C$^*$-algebras, the following result of Pedersen follows directly from our Theorem \ref{t Cebysev JB*-subtriples JPST}.

\begin{corollary}\cite[Theorem 5]{Ped79} If $A$ is a unital C$^*$-algebra and $B$ is a \v{C}eby\v{s}\"{e}v C$^*$-subalgebra of $A$, then $A=B,$ $B= \mathbb{C} 1$ or $A=M_2 (\mathbb{C})$ and $B$ is the subalgebra of diagonal matrices.$\hfill\Box$
\end{corollary}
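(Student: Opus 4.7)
The plan is to specialize Theorem~\ref{t Cebysev JB*-subtriples JPST} to the pair $(E,F)=(A,B)$ and then translate each of the four possible cases into a statement about C$^*$-algebras. Since every C$^*$-subalgebra is automatically closed under the Jordan triple product \eqref{eq ternary product on C*-algebras}, $B$ is a JB$^*$-subtriple of $A$, and so one of (a)--(d) must hold. Two auxiliary structural facts underpin the case analysis: (i) the observation made just before Corollary~\ref{c Pedersen} that every infinite-dimensional C$^*$-algebra has infinite rank, and (ii) the classification of finite-dimensional C$^*$-algebras as direct sums $\bigoplus_{i=1}^{k} M_{n_i}(\mathbb{C})$ with rank $\sum_{i} n_i$.

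Case (d) gives $A=B$ directly. Case (a) can be ruled out as follows: a C$^*$-algebra of rank $\leq 2$ must be finite-dimensional by (i), and by (ii) a finite-dimensional C$^*$-algebra of rank one is $\mathbb{C}$, contradicting the requirement $\dim(B)\geq 2$ in case (a). For case (b), $B=\mathbb{C} e$ is a one-dimensional C$^*$-subalgebra, so $B=\mathbb{C} p$ for some non-zero projection $p\in A$; as $p$ is a complete tripotent in $A$, the Peirce decomposition for a projection in a C$^*$-algebra gives $A_0(p)=(1-p)A(1-p)=0$, and since $A$ is unital we have $1-p=(1-p)\cdot 1\cdot (1-p)\in A_0(p)$, forcing $p=1$ and $B=\mathbb{C} 1$.

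The remaining case (c) is the most delicate. Both $A$ and $B$ are rank-two JBW$^*$-triples which are unital C$^*$-algebras, hence by (i) and (ii) the only possibilities for $A$ are $\mathbb{C}\oplus\mathbb{C}$ and $M_2(\mathbb{C})$. If $A=\mathbb{C}\oplus\mathbb{C}$, any rank-two C$^*$-subalgebra contains two orthogonal non-zero elements, which already span $A$, so $B=A$. If $A=M_2(\mathbb{C})$, the C$^*$-subalgebras of rank two are, by the dimension count $\sum n_i^2$ with $\sum n_i =2$, either $M_2(\mathbb{C})$ itself or of the form $\mathbb{C} p + \mathbb{C}(1-p)$ for a minimal projection $p$, which after a unitary change of basis is the algebra of diagonal matrices. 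Combining the outcomes of (a)--(d) gives exactly the three alternatives in the statement. The main technical obstacle is the explicit classification in case (c): ruling out rank-two C$^*$-subalgebras of $M_2(\mathbb{C})$ other than the diagonal one and $M_2(\mathbb{C})$ itself (and carefully interpreting the phrase ``the subalgebra of diagonal matrices'' modulo unitary equivalence); everything else is immediate from Theorem~\ref{t Cebysev JB*-subtriples JPST}.
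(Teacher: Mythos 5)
Your proposal is correct and takes essentially the same approach as the paper: the paper states that this corollary ``follows directly'' from Theorem \ref{t Cebysev JB*-subtriples JPST} and omits the details, and your case analysis (eliminating case $(a)$ because a rank-one C$^*$-algebra is $\mathbb{C}$, identifying the one-dimensional case with $\mathbb{C}1$ via completeness of the spanning projection, and classifying the rank-two possibilities $\mathbb{C}\oplus\mathbb{C}$ and $M_2(\mathbb{C})$ using that finite rank forces finite dimension) supplies precisely the routine verifications the paper leaves implicit.
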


\end{document}